\documentclass[a4paper,12pt,centertags]{amsart}
\usepackage[utf8]{inputenc}
\usepackage{mathrsfs}
\usepackage{bbm}
\usepackage[a4paper,centering]{geometry}
\usepackage[pdfdisplaydoctitle,colorlinks,urlcolor=blue,linkcolor=blue,citecolor=blue]{hyperref}
\usepackage{mathscinet}
\newtheorem{theorem}{Theorem}[section]
\newtheorem{lemma}[theorem]{Lemma}
\newtheorem{corollary}[theorem]{Corollary}
\theoremstyle{definition}
\newtheorem{assumption}[theorem]{Assumption}
\theoremstyle{remark}
\newtheorem{remark}[theorem]{Remark}
\newtheorem{example}[theorem]{Example}
\numberwithin{equation}{section}
\DeclareMathOperator{\Div}{div}
\DeclareMathOperator{\Tr}{Tr}
\DeclareMathOperator{\Span}{span}
\newcommand{\e}{\operatorname{e}}	
\newcommand{\N}{\mathbf{N}}		
\newcommand{\R}{\mathbf{R}}		
\newcommand{\E}{\mathbb{E}}		
\newcommand{\Prob}{\mathbb{P}}		
\newcommand{\scal}[1]{\langle #1 \rangle}
\newcommand{\uno}{\mathbbm{1}}		
\newcommand{\malder}{\mathcal{D}}	
\newcommand{\leb}{\mathscr{L}}
\newcommand{\cov}{\mathcal{C}}
\usepackage{xspace}
\newcommand{\memo}[1]{
  \ensuremath{\framebox{\tiny\textbf{\kern-2pt\textsf{#1}}\kern-2pt}}\xspace
}
\hypersetup{%
  pdftitle={Existence of densities for 3D Navier--Stokes with noise},
  pdfauthor={A. Debussche, M. Romito},
  pdfcreator={M. Romito}
}
\begin{document}
  \title[Existence of densities for 3D Navier--Stokes with noise]%
        {Existence of densities for the 3D Navier--Stokes equations driven by Gaussian noise}
  \author[A. Debussche]{Arnaud Debussche}
    \address{\'Ecole Normale sup\'erieure de Cachan\\ Antenne de Bretagne\\ Campus de Ker Lann\\ Avenue Robert Schuman\\ F-35170 Bruz, France}
    \email{\href{mailto:arnaud.debussche@bretagne.ens-cachan.fr}{arnaud.debussche@bretagne.ens-cachan.fr}}
    \urladdr{\url{http://www.bretagne.ens-cachan.fr/math/people/arnaud.debussche}}
  \author[M. Romito]{Marco Romito}
    \address{Dipartimento di Matematica, Universit\`a di Pisa, Largo Bruno Pontecorvo 5, I--56127 Pisa, Italia}
    \email{\href{mailto:romito@dm.unipi.it}{romito@dm.unipi.it}}
    \urladdr{\url{http://www.dm.unipi.it/pages/romito}}
  \subjclass[2010]{Primary 76M35; Secondary 60H15, 60G30, 35Q30}
  \keywords{Density of laws, Navier-Stokes equations, stochastic partial differential
  equations, Besov spaces.}
  \date{March 1, 2012}
  \begin{abstract}
   We prove three results on the existence of densities for the laws of finite dimensional 
   functionals of the solutions of the stochastic Navier-Stokes equations in dimension $3$.  
   In particular, under very mild assumptions on the noise, we prove that finite dimensional 
   projections of the solutions have densities with respect to the Lebesgue measure which have
   some smoothness when measured in a Besov space. This is proved thanks to a new argument
   inspired by an idea introduced in \cite{FouPri10}
  \end{abstract}
\maketitle
\section{Introduction}

Our aim in this article is to give informations on the law of the solutions of the
stochastic Navier-Stokes equations in dimension three. The equations have the form
\begin{equation}\label{e:nse}
  \begin{cases}
    \dot u
      - \nu\Delta u
      + (u\cdot\nabla)u
      + \nabla p
      = \dot\eta,\\
    \Div u = 0,
  \end{cases}
\end{equation}
on a bounded open set $\mathcal O$. Here $u$ is the velocity, $p$ the pressure and 
$\nu$ the viscosity of an incompressible fluid in the region $\mathcal O$. The
equations are supplemented with an initial data and suitable boundary conditions.
This equation has been the subject of intense researches, a survey can be found
in the reference \cite{Fla08}.
The forcing term $\dot\eta$ in the problem above is a Gaussian noise which is
white in time and depending on space (precise definitions will be given later).
Under suitable assumptions on the noise, it is known that there exist weak
solutions, both in the probabilistic and PDE sense. Their uniqueness is a completely
open problem. Also, there exists a unique strong solution on small time intervals.
The situation is therefore similar to the theory of the deterministic Navier-Stokes
equations. 

However, some more informations can be obtained if the noise is sufficiently non
degenerate. It has been shown in \cite{DapDeb03,DebOda06,FlaRom06,FlaRom07,FlaRom08}
that it is possible to construct Markov solutions which depends continuously on
the initial data. This indicates that the noise can be helpful to obtain more results
in the stochastic case.

It is thus important to understand more deeply the implications of the addition of
noise. In this article, we investigate the existence of densities of the law of the
solutions. Existence of densities can be considered as a sort of smoothness of the
solution, albeit a purely probabilistic one.

A first classical difficulty is that the solutions live in an infinite dimensional
space and that no standard reference measure, such as the Lebesgue measure in finite
dimension, exists. It is tempting to use other measures as reference measure and,
in \cite{BogDapRoc96, DapDeb04, MatSui05} for instance, it is proved that for some
equations the solutions have densities with respect to a Gaussian measure.
Unfortunately, these results do not even cover the stochastic Navier-Stokes equations
in dimension two. Another possibility is to try to prove existence of densities for
finite dimensional functionals of the solutions. The problem of existence of densities
of the solutions evaluated at a fixed spatial point has been already studied
by several authors and we point to \cite{Nua06} for references. In the case
of the two dimensional Navier-Stokes equation, finite dimensional projections
of the solutions are studied in \cite{MatPar06}, where using Malliavin 
calculus, it is proved that there exist smooth densities.

Unfortunately, it seems hopeless to use Malliavin calculus for the three dimensional
Navier-Stokes equations. Indeed, it is not even possible to prove that the solutions
are Malliavin differentiable. The reason for this is immediately apparent once one
notices that the equation satisfied by the Malliavin derivative is essentially
the linearisation of Navier--Stokes, and any piece of information on that
equation could be used with much more proficiency for proving well--posedness.

In this article, we propose three different approaches to the problem which give different
results. First, in Section~\ref{s:markov} we prove existence of densities
under strong regularity and non--degeneracy assumptions on the noise
(see Assumption~\ref{a:markovnoise}). Due to the stronger assumptions,
we are able to prove existence of densities for any smooth enough map
of the solution with values in a finite dimensional space. In
Section~\ref{s:girsanov} we prove, by means of Girsanov's theorem,
existence of densities for projections onto sub--spaces spanned by a finite
number of Fourier modes, under the sole assumption that the covariance
is injective (hence without regularity assumptions). A by--product of
this technique is that the same statement holds also for the projection
(onto the same sub--spaces) of the joint law of the solution evaluated at
a finite number of time instants (see Remark~\ref{r:piutempi}).
Finally, in Section~\ref{s:besov}, under the same assumptions on the
covariance of the Girsanov case, we prove again existence of densities
with a completely different method, which extends an idea of \cite{FouPri10}.
This allows to show regularity of densities in the class of Besov spaces
and to prove that they are in $L^p$ spaces, for some $p>1$, whereas the first
two methods only provide existence of a density in $L^1$. The regularity can
be even slightly improved for stationary solutions.

We believe that these result will helpful for future research on the
Navier--Stokes equations. Recall for instance that the results of
\cite{MatPar06} have been a crucial step towards the fundamental result
of \cite{HaiMat06}. Moreover, it seems that some ideas introduced in this
article are new and can be used to prove existence of densities in other
situations, we refer to \cite{DebFou12,Fou12} for two applications of the
method used in Section \ref{s:besov}.
\subsection*{Acknowledgements}

The second author wishes to thank the kind hospitality of the {ENS}
de Cachan  Bretagne, where part of this work was realised.
\section{Preliminaries}\label{s:notation}

We consider problem \eqref{e:nse} with either periodic boundary conditions
on the three--dimensional torus $\mathcal O=[0,2\pi]^3$ or Dirichlet
boundary conditions on a smooth domain $\mathcal O\subset \R^3$. 
Let $H$ be the closure in $L^2=L^2(\mathcal O;\R^3)$ of the space
of smooth vector fields with divergence zero and satisfying the boundary
conditions (either periodic or Dirichlet). The inner product in $H$ is
denoted by $\scal{\cdot,\cdot}$ and its norm by $\|\cdot\|_H$. The space
$V$ is the closure of the same space with respect to the $H^1=H^1(\mathcal O;\R^3)$
norm. Denote by $\Pi$ the Leray projector, namely the orthogonal projector
of $L^2$ onto $H$.

Let $A=-\Pi\Delta$, with domain $D(A)=V\cap H^2(\mathcal O,\R^3)$, the
\emph{Stokes} operator and let $(\lambda_k)_{k\geq1}$ and $(e_k)_{k\geq1}$
be the eigenvalues and the corresponding orthonormal basis of eigenvectors of $A$.

The bi--linear operator $B:V\times V\to V'$ is the (Leray) projection
of the non--linearity $(u\cdot\nabla)u$ onto divergence--free vector fields:
\[
  B(u,v)
    = \Pi\left( u\cdot\nabla v\right),
      \qquad u,v\in V,
\]
and $B(u)=B(u,u)$. The operator $B$ can be easily extended to more general $u,v$.
We recall that the following properties hold,
\begin{equation}\label{e:B}
  \scal{u_1, B(u_2,u_3)} = - \scal{u_3, B(u_2,u_1)}
    \qquad\text{and}\qquad
  \scal{u_!, B(u_2,u_1)} = 0,
\end{equation}
for all $u_1,u_2,u_3$ such that the above expressions make sense. Moreover,
there is a constant $c>0$ such that
\begin{equation}\label{e:basic}
  \|A^{\frac12}B(u_1,u_2)\|_H
    \leq c\|Au_1\|_H\|Au_2\|_H,
      \qquad u_1,\, u_2\in D(A).
\end{equation}
(see for instance \cite{ConFoi88}).
We refer to Temam \cite{Tem95} for a detailed account of all the above definitions.

We assume that the noise $\dot\eta$ in \eqref{e:nse} is of white noise type and can 
be described as follows.  Consider a filtered probability space
$(\Omega,\mathcal F,\mathbb P,\{\mathcal F_t\}_{t\geq 0})$ and a cylindrical
Wiener process $W=\sum_{i\in \N} \beta_i q_i$, where $(\beta_i)_{i\in \N}$
is a family of independent Wiener processes adapted to $(\mathcal F_t)_{t\ge 0}$
and $(q_i)_{i\in\N}$ is an orthonormal basis of $H$ (see \cite{DapZab92}).
The noise $\dot\eta$ is coloured in space with a covariance operator
$\cov\in\mathcal{L}(H)$ which is positive and symmetric. It is thus of the form 
$ \dot\eta=\cov^{\frac12}\frac{dW}{dt}$.
We assume that $\cov$ is trace--class and we denote by $\sigma^2 = \Tr(\cov)$
its trace. Finally, consider the sequence $(\sigma_k^2)_{k\in\N}$ of eigenvalues
of $\cov$. It is no loss of generality to assume that  $(q_k)_{k\in\N}$ is the
orthonormal basis in $H$ of eigenvectors of $\cov$: $\cov q_k = \sigma_k^2 q_k$.
Further assumptions on $\cov$ will be considered in the following sections.

With the above notations, we can recast problem~\eqref{e:nse} as an
abstract stochastic equation
\begin{equation}\label{e:nseabs}
  du + \nu Au + B(u) = \cov^\frac12\,dW,
\end{equation}
supplemented with an initial condition $u(0)=x\in H$. It is classical that for
any $x\in H$ there exist a martingale solution of this equation. More precisely,
there exists a filtered probability space $(\widetilde\Omega,
\widetilde{\mathcal F},\widetilde{\mathbb P},
\{\widetilde{\mathcal F}_t\}_{t\ge 0})$, a cylindrical Wiener process
$\widetilde W$ and a process $u$ with trajectories in 
$C(\R^+;D(A^{-1}))\cap L^\infty_{loc}(\R^+,H)\cap L^2_{loc}(\R^+;V)$
adapted to $(\widetilde{\mathcal F}_t)_{t\ge 0}$ such that the above
equation is satisfied with $\widetilde W$ replacing $W$. See for instance
\cite{Fla08} and the references therein for further details.

The existence of martingale solutions is equivalent to the existence
of a solution of the following martingale problem. We say that a probability
measure $\Prob_x$ on  $C(\R^+;D(A^{-1}))$ is a solution of the martingale
problem associated to equation \eqref{e:nseabs}  with initial condition
$x\in H$ if
\begin{itemize}
  \item $\Prob_x[L^\infty_{loc}(\R^+,H)\cap L^2_{loc}(\R^+;V)] = 1$,
  \item for each $\phi\in D(A))$, the process 
    \[
     \scal{\xi_t -\xi_0,\phi}
       + \int_0^t \scal{\xi_s, A\phi}
       - \scal{B(\xi_s,\phi),\xi_s}\,ds
    \]
    is a continuous square integrable martingale with quadratic variation
    equal to $t\|\cov^{1/2}\phi\|_H^2$,
  \item the marginal of $\Prob_x$ at time $0$ is the Dirac mass $\delta_x$ at $x$,
\end{itemize}
where in the formulae above $(\xi_t)_{t\geq0}$ is the canonical process
on the path space $C(\R^+;D(A^{-1}))$.

The law of a martingale solutions is a solution of the martingale problem.
Conversely, given a solution of the martingale problem, it is not difficult
to prove that the canonical process provides a martingale solution (see
\cite{Fla08} for details).
 
If $K$ is an Hilbert space, we denote by $\mathcal{L}(K)$ the space
of linear bounded operators from $K$ into itself, by $\pi_F:K\to K$
the orthogonal projection of $K$ onto a subspace $F\subset K$, and by
$\Span[x_1,\dots,x_n]$ the subspace of $K$ generated by
its elements $x_1,\dots,x_n$. Also $\mathcal{B}(K)$ is the set of
Borel subsets of $K$.

We shall use the symbol $\leb_d$ to denote the Lebesgue measure
on $\R^d$ and the symbol $\leb_F$ to denote the Lebesgue measure
on a finite dimensional space $F$ induced by the representation by
a basis. Finally, given a measure $\mu$ and a measurable map $f$,
we denote by $f_\#\mu$ the image measure of $\mu$ through $f$,
namely $(f_\#\mu)(E) = \mu(f^{-1}(E))$.
\section{Existence of densities with non--degenerate noise: the Markovian case}
  \label{s:markov}

In this section we shall consider the following assumptions on the covariance.
\begin{assumption}\label{a:markovnoise}
  There are $\epsilon>0$ and $\delta\in(1,\tfrac32]$ such that
  \begin{itemize}
    \item $\Tr(A^{1+\epsilon}\cov)<\infty$,
    \item $\cov^{-\frac12}A^{-\delta}\in\mathcal{L}(H)$.  
  \end{itemize}
  
  For example, $\cov = A^{-\alpha}$ with $\alpha\in(\tfrac52,3]$
  satisfies the above assumptions.
\end{assumption}
Under the above assumptions, it has been proved in \cite{DapDeb03,DebOda06}
and in \cite{FlaRom06,FlaRom07,FlaRom08} using a different method that
there exists a family, indexed by the initial condition, of Markov solutions.

We say that $P(\cdot,\cdot,\cdot):[0,\infty)\times D(A)\times\mathcal{B}(D(A))
\to [0,1]$ is a Markov kernel in $D(A)$ of transition probabilities associated
to equation \eqref{e:nseabs} if $P(\cdot,\cdot,\Gamma)$ is Borel measurable
for every $\Gamma\in\mathcal{B}(D(A))$, $P(t,x,\cdot)$ is a probability
measure on $\mathcal{B}(D(A))$ for every $(t,x)\in [0,\infty)\times D(A)$,
the Chapman--Kolmogorov equation
\[
  P(t +s,x,\Gamma)
    = \int_{D(A)} P(t,x,dy)P(s,y,\Gamma)
\]
holds for every $t,s\geq0$, $x\in D(A)$, $\Gamma\in\mathcal{B}(D(A))$,
and for every $x\in D(A)$ there is a solution $\Prob_x$ of the martingale
problem associated to equation \eqref{e:nseabs} with initial condition $x$
such that $P(t,x,\Gamma) = \Prob_x(\xi_t\in\Gamma)$ for all $t\ge 0$.

Moreover, $P(\cdot,x,\cdot)$ and $\Prob_x$, solution of the martingale problem,
can be defined for all $x\in H$ and the Chapman--Kolmogorov equation holds almost
everywhere in $s$. More precisely, for every $x\in H$ and every $t\ge 0$, there
is a set $I\subset \R^+$ such that the Chapman--Kolmogorov equation 
holds for all $\Gamma\in \mathcal{B}(H)$. Also  
\[
  \mathcal{P}_t\varphi (x)
    = \E^{\Prob_x}[\varphi(\xi_t)],
      \qquad t\geq 0, x\in H,
\]
defines a transition semigroup $(\mathcal{P}_t)_{t\geq0}$. It turns
out that this transition semigroup has the strong Feller property, that
is $\mathcal{P}_t\phi$ is continuous on $D(A)$ if $\phi$ is merely
bounded measurable. Several other results can be found in the above references.
In the following, by a Markov solution $(\Prob_x)_{x\in H}$ we mean
a family of probability measures on $C(\R^+;D(A))$ associated to
transition probabilities satisfying the above properties.

Let $f:D(A)\to\R^d$ be $C^1$ and define, for our purposes,
a \emph{singular point} $x$ of $f$ as a point where the range of $Df(x)$ is
a proper subspace of $\R^d$. The following result is proved in Sections
\ref{s2.1}, \ref{s2.2} and \ref{s2.3} below.
\begin{theorem}\label{t:markov}
  Let Assumption \ref{a:markovnoise} hold and let $f:D(A)\to\R^d$
  be a map such that the set of singular points (as defined above)
  is not dense. Given an arbitrary Markov solution $(\Prob_x)_{x\in H}$,
  let $u(\cdot,x)$ be a random field with distribution $\Prob_x$. Then
  for every $t>0$ and every $x\in H$, the law of the random variable
  $f(u(t;x))$ has a density with respect to the Lebesgue measure
  $\leb_d$ on $\R^d$, which is almost everywhere positive.
\end{theorem}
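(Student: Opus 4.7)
The strategy is to combine the smoothing of the Markov kernel (established in \cite{DapDeb03,DebOda06,FlaRom06,FlaRom07,FlaRom08} under Assumption~\ref{a:markovnoise}) with a Bismut--Elworthy--Li-type integration-by-parts formula and the density of regular points of $f$. The argument naturally splits into a reduction to regular initial data, a Fourier-analytic control of the law of $f(u(t,x))$, and an a.e.-positivity argument.

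First, I would use the Chapman--Kolmogorov equation $\mu_t^x = \int P(t/2,y,\cdot)\,\mu_{t/2}^x(dy)$ together with the instantaneous regularization $\Prob_x[u(t/2)\in D(A)]=1$ (built into the construction of Markov solutions under Assumption~\ref{a:markovnoise} through the Galerkin scheme) to reduce to initial data $x\in D(A)$. Then, for $x\in D(A)$, I would invoke the integration-by-parts identity underlying the strong Feller proofs,
\[
  D_h \mathcal{P}_t\varphi(x) = \E^{\Prob_x}\bigl[\varphi(u(t,x))\,\Lambda_t(h)\bigr], \qquad h\in D(A),
\]
valid for $\varphi$ bounded measurable, with $\Lambda_t(h)\in L^1$ controlled linearly in $\|h\|_{D(A)}$, and iterate it $d$ times to obtain analogous higher-order formulas carrying an $L^1$ weight $\Lambda_t^{(d)}(h_1,\dots,h_d)$.

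Next, at a regular point $x^*\in D(A)$ of $f$ (which exists near $x$ by the density assumption on the regular set), I would pick $h_1,\dots,h_d\in D(A)$ with $Df(x^*)h_j=e_j$. Applied to $\varphi_\xi(z)=e^{i\langle\xi,f(z)\rangle}$, the iterated formula produces the leading term $D_{h_1}\cdots D_{h_d}\varphi_\xi(x^*)=(i\xi_1)\cdots(i\xi_d)\varphi_\xi(x^*)$, yielding a pointwise estimate
\[
  |\xi_1\xi_2\cdots\xi_d|\,|\hat\nu_t^{x^*}(\xi)| \leq C(t,x^*) + \text{lower-order terms},
\]
where $\hat\nu_t^{x^*}$ is the characteristic function of $\nu_t^{x^*}:=f_\#\mu_t^{x^*}$. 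Rotating the basis $(h_j)$ inside the non-degenerate cone at $x^*$ covers every direction of $\R^d$ and gives integrability of $\hat\nu_t^{x^*}$, so Fourier inversion produces a bounded density; the reduction step then yields a density for arbitrary $x\in H$. The a.e.-positivity follows from an irreducibility argument: under Assumption~\ref{a:markovnoise} the stochastic convolution has dense range, and combining full support with approximate controllability of Navier--Stokes shows that $\nu_t^x$ charges every open neighbourhood of a regular value reachable by the dynamics.

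The main obstacle is establishing the iterated integration-by-parts identity with $\Lambda_t^{(d)}\in L^1$: since the 3D Navier--Stokes solution is not Malliavin differentiable in the classical sense, one must either work at the Galerkin level and pass to the limit with uniform estimates, or adapt the direct analytic approach of \cite{DapDeb03,FlaRom08}. In both routes the quantitative non-degeneracy $\cov^{-\frac12}A^{-\delta}\in\mathcal{L}(H)$ from Assumption~\ref{a:markovnoise} is precisely what allows one to control the weights $\Lambda_t(h)$ and their iterates in $L^1$.
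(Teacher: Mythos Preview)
Your proposal contains a genuine conceptual gap. The Bismut--Elworthy--Li formula
\[
  D_h\mathcal{P}_t\varphi(x)=\E^{\Prob_x}\bigl[\varphi(u(t,x))\,\Lambda_t(h)\bigr]
\]
differentiates the semigroup in the \emph{initial condition} $x$; it does not integrate by parts in the \emph{noise}. Applied to $\varphi_\xi=\e^{i\langle\xi,f(\cdot)\rangle}$, it yields $|D_h\hat\nu_t^x(\xi)|\le\E|\Lambda_t(h)|$, i.e.\ a bound, uniform in $\xi$, on how the characteristic function moves with $x$. It gives no decay of $\hat\nu_t^{x^*}(\xi)$ in $\xi$ at a \emph{fixed} $x^*$. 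Your ``leading term'' computation $D_{h_1}\cdots D_{h_d}\varphi_\xi(x^*)=(i\xi_1)\cdots(i\xi_d)\varphi_\xi(x^*)$ differentiates the test function at the point $x^*$, whereas the BEL identity differentiates the expectation $x\mapsto\E[\varphi_\xi(u(t,x))]$; these two objects are not equal, and no iteration of BEL converts smoothness in $x$ into the Malliavin-type integration by parts you would need to pull a factor $\xi_j$ out of $\hat\nu_t^{x^*}(\xi)$. In short, strong Feller estimates (even of arbitrary order) do not by themselves yield absolute continuity of $P(t,x,\cdot)$ for fixed $x$.

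What the paper does instead is precisely to circumvent the lack of Malliavin differentiability of the 3D solution. It introduces the truncated nonlinearity $B_R(v)=\chi_R(\|Av\|_H^2)B(v)$, for which the equation is globally well-posed in $D(A)$ and the solution $u_R(t;x)$ belongs to $\mathbb{D}^{1,2}(D(A))$. On this truncated problem, classical Malliavin calculus applies: the Malliavin matrix of $f(u_R(t;x))$ is shown to be a.s.\ invertible by exploiting the continuity of $\eta_k(t,s;x)$ down to $s=t$ (where $\eta_k(t,t;x)=q_k$) together with the full support of the law of $u_R(t;x)$ in $D(A)$ and the assumption that regular points of $f$ are dense. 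The passage from $u_R$ back to the Markov solution is done via weak--strong uniqueness and a careful Chapman--Kolmogorov argument (Lemma~\ref{l:reduction}). Your reduction to $x\in D(A)$ and your positivity step via irreducibility are in the same spirit as the paper's, but the core mechanism---truncate, apply Malliavin calculus where it is legitimate, then transfer back---is entirely absent from your outline and cannot be replaced by BEL.
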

\begin{example}
  We give a few significant examples for the previous theorem.
  \begin{itemize}
    \item Functions such as $f(u(t)) = \|u(t)\|_H^2$, as well as any
      other norm which is well defined in $D(A)$ admit a density with
      respect to the Lebesgue measure.
    \item In view of the results of the following sections, consider
      the case where $f\approx \pi_F$, where $F$ is a finite
      dimensional subspace of $D(A)$, $\pi_F$ is the projection onto
      $F$ and $f$ is given as $f(x)=(\scal{\cdot,f_1},\dots,\scal{\cdot,f_d})$,
      where $f_1,\dots,f_d$ is a basis of $F$. Then the image measure
      $(\pi_F)_\#\Prob_x$ is absolutely continuous with respect to the
      Lebesgue measure of $F$.
    \item Given points $y_1,\dots,y_d\in\R^3$ (or in the corresponding
      bounded domain in the Dirichlet boundary condition case), the map 
      $x\mapsto(x(y_1),\dots,x(y_d))$, defined on $D(A)$, clearly
      meets the assumptions of the previous theorem, and hence has a density
      on $\R^d$, since the elements of $D(A)$ are continuous functions
      by Sobolev's embeddings.
  \end{itemize}
\end{example}
\begin{remark}
  In view of \cite{RomXu09,AlbDebXu10}, the same result holds true under
  a slightly weaker assumption of non--degeneracy on the covariance of
  the driving noise. In few words, a finite number of components of the
  noise can be zero.
\end{remark}
By the results of \cite{DapDeb03,DebOda06,Oda07} or \cite{Rom08}, each
Markov solution converges to its unique invariant measure.
The following result is a straightforward consequence of the theorem above.
\begin{corollary}
  Under the same assumptions of Theorem~\ref{t:markov}, given a Markov
  solution $(\Prob_x)_{x\in H}$, denote by $\mu_\star$ its
  invariant measure. Then the image measure $f_\#\mu_\star$ has
  a density with respect to the Lebesgue measure on $\R^d$.
\end{corollary}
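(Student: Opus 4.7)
The plan is to reduce the corollary to Theorem~\ref{t:markov} via the defining invariance relation $\mu_\star = \mathcal P_t^* \mu_\star$. I would fix any $t > 0$ (the choice is immaterial, since invariance gives absolute continuity for all such $t$) and test absolute continuity of $f_\#\mu_\star$ against Lebesgue--null sets rather than building the density explicitly.

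Concretely, let $N \in \mathcal B(\R^d)$ with $\leb_d(N) = 0$. By invariance,
\[
  f_\#\mu_\star(N)
    = \mu_\star(f^{-1}(N))
    = \int_H P(t,x,f^{-1}(N))\,\mu_\star(dx)
    = \int_H \Prob_x\bigl(f(\xi_t)\in N\bigr)\,\mu_\star(dx).
\]
Now Theorem~\ref{t:markov} asserts that for every $x\in H$ the law of $f(\xi_t)$ under $\Prob_x$ is absolutely continuous with respect to $\leb_d$, hence $\Prob_x(f(\xi_t)\in N)=0$. Consequently $f_\#\mu_\star(N)=0$, and the Radon--Nikodym theorem provides the sought density.

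The only subtleties I anticipate are formal. First, one must know that the invariant measure $\mu_\star$ is supported on $H$ so that the transition kernel $P(t,x,\cdot)$ introduced in Section~\ref{s:markov} is defined $\mu_\star$--a.e.; this is guaranteed by the references (\cite{DapDeb03,DebOda06,Oda07,Rom08}) already cited just before the corollary, which establish existence, uniqueness and concentration of $\mu_\star$ within the Markov family. Second, one needs the Chapman--Kolmogorov identity $\mu_\star = \mathcal P_t^*\mu_\star$ to hold for the particular $t$ one picks; as recalled in Section~\ref{s:markov} the identity holds for a.e.\ $t$, and by standard right--continuity arguments any $t>0$ can be used, or alternatively one picks $t$ in the full--measure set and the conclusion is still independent of $t$.

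There is no real obstacle: the work was done in Theorem~\ref{t:markov}, and the corollary is a routine disintegration argument. If one wanted an explicit density, one could invoke Fubini to write $\rho(y) = \int_H \rho_x(y)\,\mu_\star(dx)$, where $\rho_x$ is the density of $f(\xi_t)$ under $\Prob_x$, but for the statement as given the null--set argument above suffices.
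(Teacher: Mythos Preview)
Your argument is correct and is essentially the same as the paper's: both test $f_\#\mu_\star$ against Lebesgue--null sets, use the invariance relation $\mu_\star(\cdot)=\int P(t,x,\cdot)\,\mu_\star(dx)$, and invoke Theorem~\ref{t:markov} to conclude that each $P(t,x,f^{-1}(N))$ vanishes. The paper integrates over $D(A)$ (using $\mu_\star(D(A))=1$) while you integrate over $H$, but since Theorem~\ref{t:markov} applies for every $x\in H$ this makes no difference; your extra remarks on the a.e.-in-$t$ issue are harmless but unnecessary, since invariance of $\mu_\star$ under $\mathcal P_t$ holds for every $t\geq0$ by definition.
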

\begin{proof}
  If $(P(t,x,\cdot))_{t\geq0,x\in H}$ is the corresponding Markov
  transition kernel and $E\subset\R^d$ has Lebesgue measure
  $\leb_d(E)=0$, then by Theorem \ref{t:markov}
  $P(t,x, f^{-1}E)=f_\#P(t,x, E)=0$ for each $x\in H$ and $t>0$.
  Then, by Chapman--Kolmogorov, 
  \[
    f_\#\mu_\star(E)
      = \int_{D(A)} P(t,x, f^{-1}E)\,\mu_\star(dx)
      = 0,
  \]
  since $\mu_\star(D(A))=1$.
\end{proof}
\subsection{Reduction to the local smooth solution}
\label{s2.1}

Let $\chi\in C^\infty(\R)$ be a function such that $0\leq \chi\leq 1$,
$\chi(s)=1$ for $s\leq 1$ and $\chi(s)=0$ for $s\geq 2$, and set
for every $R>0$, $\chi_R(s)=\chi(\tfrac{s}{R})$. Set
\begin{equation}\label{e:BR}
  B_R(v)
    = \chi_R(\|Av\|_H^2)B(v)
\end{equation}
and denote by $u_R(\cdot;x)$ the solution of
\begin{equation}\label{e:nsesmooth}
  du_R + \bigl(\nu A u_R + B_R(u_R)\bigr)\,dt = \cov^{\frac12}dW.
\end{equation}
with initial condition $x\in D(A)$.
Existence, uniqueness as well as several regularity properties
are proved in \cite[Theorem 5.12]{FlaRom08}. We denote by $P_R(\cdot,\cdot,\cdot)$ 
and $\Prob^R_x$ the associated transition probabilities and laws of the solutions. 

Define
\begin{equation}\label{e:blowup}
  \tau_R
    = \inf\{t\geq0: \|Au_R(t)\|_H^2\geq R\},
\end{equation}
then again by \cite[Theorem 5.12]{FlaRom08} it follows that $\tau_R>0$
with probability one if $\|Ax\|_H^2<R$ and that \emph{weak--strong
uniqueness} holds: every martingale solution of \eqref{e:nseabs} starting
at the same initial condition $x$ coincides with $u_R(\cdot;x)$ up to
time $t$ on the event $\{\tau_R>t\}$, for every $t>0$.
\begin{lemma}\label{l:reduction}
  Let Assumption~\ref{a:markovnoise} be true and let $f:D(A)\to\R^d$ be
  a measurable function. Assume that for every $x\in D(A)$, $t>0$ and
  $R\geq1$ the image measure $f_\#P_R(t,x,\cdot)$ of
  the transition density $P_R(t,x,\cdot)$ corresponding to problem
  \eqref{e:nsesmooth} is absolutely continuous with respect to the Lebesgue
  measure $\leb_d$ on $\R^d$. Then the probability measure $f_\#P(t,x,\cdot)$
  is absolutely continuous with respect to $\leb_d$ for every $x\in H$
  every $t>0$ and every Markov solution $(\Prob_x)_{x\in H}$.
\end{lemma}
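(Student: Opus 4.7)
The plan is as follows. Fix a Borel null set $E\subset\R^d$; it suffices to prove $P(t,x,f^{-1}E)=0$ for every $t>0$ and every $x\in H$. I will first treat initial data $x\in D(A)$ by comparison with the truncated flow $u_R$, and then bootstrap to arbitrary $x\in H$ using the Chapman--Kolmogorov equation.

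\textbf{Smooth initial datum.} Fix $x\in D(A)$, $t>0$, and $R>\|Ax\|_H^2$. The weak--strong uniqueness recalled after \eqref{e:blowup} forces $\Prob_x$ and $\Prob^R_x$ to agree on the $\sigma$--algebra of events depending only on the path up to $\tau_R\wedge t$. The hypothesis of the lemma yields
\[
  \Prob^R_x(\xi_t\in f^{-1}E)
    = (f_\#P_R(t,x,\cdot))(E) = 0,
\]
so a fortiori $\Prob^R_x(\xi_t\in f^{-1}E,\,\tau_R>t)=0$, and by weak--strong uniqueness the same is true under $\Prob_x$. Therefore
\[
  P(t,x,f^{-1}E)
    \leq \Prob_x(\tau_R\leq t)
    = \Prob^R_x(\tau_R\leq t),
\]
and sending $R\to\infty$ kills the right-hand side, because $u_R$ is globally well-posed in $D(A)$ by \cite[Theorem 5.12]{FlaRom08}, so its trajectories are $\Prob^R_x$--a.s.\ bounded in $D(A)$ on the compact interval $[0,t]$. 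This settles the case $x\in D(A)$.

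\textbf{Arbitrary initial datum.} For $x\in H$, I will combine the almost--everywhere Chapman--Kolmogorov equation stated earlier with the instantaneous regularisation of Markov solutions established in \cite{FlaRom08}: under Assumption \ref{a:markovnoise}, $P(s,x,D(A))=1$ for almost every $s>0$ (a consequence of the strong integrability condition $\Tr(A^{1+\epsilon}\cov)<\infty$). Picking $s\in(0,t)$ in the intersection of the full measure set on which Chapman--Kolmogorov holds and the full measure set on which $P(s,x,D(A))=1$,
\[
  P(t,x,f^{-1}E)
    = \int_{D(A)} P(s,x,dy)\,P(t-s,y,f^{-1}E) = 0,
\]
by the previous step applied at time $t-s>0$ to each $y\in D(A)$.

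\textbf{Expected difficulty.} The subtlest point is giving a clean meaning to the identity $\Prob_x(\tau_R\leq t)=\Prob^R_x(\tau_R\leq t)$: since the Markov solution is not a priori pathwise in $D(A)$ for all times, the stopping time $\tau_R$ must either be interpreted as a functional on canonical path space (set equal to $+\infty$ on paths that leave $D(A)$), or one must work instead with the restrictions of $\Prob_x$ and $\Prob^R_x$ to the $\sigma$--algebra generated by the evaluations up to $\tau_R\wedge t$. Once this measurability bookkeeping is settled, the remaining ingredients --- non--explosion of the truncated flow and the a.e.\ $D(A)$--regularisation of the Markov kernel --- are standard in the present setting and are provided by the results cited from \cite{FlaRom08}.
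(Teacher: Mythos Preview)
Your reduction from $x\in H$ to $x\in D(A)$ via the almost--everywhere Chapman--Kolmogorov identity is fine and matches the paper's Step~4. The gap is in the first step, where you claim
\[
  \Prob_x(\tau_R\leq t)=\Prob^R_x(\tau_R\leq t)\longrightarrow 0
  \qquad\text{as }R\to\infty.
\]
Your justification (``$u_R$ is globally well--posed in $D(A)$, hence its trajectories are bounded in $D(A)$ on $[0,t]$'') is a non--sequitur: boundedness of $u_R$ in $D(A)$ on $[0,t]$ says nothing about whether the bound is below $R$. Worse, since for $R<R'$ the processes $u_R$ and $u_{R'}$ coincide up to $\tau_R$, the stopping times $\tau_R$ are increasing and $\tau_\infty:=\sup_R\tau_R$ is exactly the blow--up time of the \emph{untruncated} local strong solution of the $3$D Navier--Stokes equations. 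Thus your claim $\Prob^R_x(\tau_R\leq t)\to 0$ is equivalent to $\Prob(\tau_\infty>t)=1$, i.e.\ to almost--sure global regularity of the strong solution --- precisely the open problem that the whole machinery of Markov selections is designed to circumvent.

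The paper avoids this trap by never sending $R\to\infty$ at fixed time. Instead, it first applies Chapman--Kolmogorov at a small time $\epsilon$ to write
\[
  P(t+\epsilon,x,f^{-1}B)
    = \int P(\epsilon,y,f^{-1}B)\,P(t,x,dy),
\]
uses the comparison $|P(\epsilon,y,\cdot)-P_R(\epsilon,y,\cdot)|\le 2\Prob_y[\tau_R\le\epsilon]$ together with the hypothesis $P_R(\epsilon,y,f^{-1}B)=0$, and then exploits the purely \emph{local} fact $\Prob_y[\tau_R\le\epsilon]\to 0$ as $\epsilon\downarrow 0$ for fixed $R>\|Ay\|_H^2$ (this is \cite[Proposition~11]{FlaRom07}). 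The limit $R\to\infty$ is taken only afterwards, and only to make $P(t,x,\{\|Ay\|_H\ge R\})$ small, which follows from $P(t,x,D(A))=1$. Finally one needs right--continuity $P(t+\epsilon,x,f^{-1}B)\to P(t,x,f^{-1}B)$, which is a nontrivial property of Markov solutions taken from \cite[Lemma~3.1]{Rom08a}. Your argument can be repaired along these lines, but not by the route you propose.
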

\begin{proof}
  Fix a Markov solution $(\Prob_x)_{x\in H}$ and denote by $P(t,x,\cdot)$
  the associated transition kernel.

  \emph{Step 1}. We prove that each solution is concentrated on $D(A)$
  at every time $t>0$, for every initial condition $x$ in $H$.
  By \cite[Lemma 3.7]{Rom08}
  \[
    \E^{\Prob_x}\Bigl[\int_0^t \|A\xi_s\|^\delta\,ds\Bigr]
      <\infty,
  \]
  for some $\delta >0$. Thus $P(s,x,D(A))=1$  for almost every
  $s\in [0,t]$. Recall that, for $z\in D(A)$ and $r\ge 0$,
  $P(r,z,D(A))=1$. We deduce that $P(t-s,y,D(A))=1$,
  $P(s,x,\cdot)$--{a.~s.} for almost every $s\in [0,t]$.
  Since the Chapman-Kolmogorov equation holds for almost every
  $s$, we have:
  \[
    P(t,x,D(A))
      =\frac1t \int_0^t \Bigl(\int P(t-s,y,D(A))\,P(s,x,dy)\Bigr)\,ds
      = 1.
  \]

  \emph{Step 2}. Given $x\in D(A)$, $s>0$ and $B\subset D(A)$ measurable,
  we prove the following formula, 
  \[
    \bigl|P(s,x,B) - P_R(s,x,B)\bigr|
      \leq 2\Prob_x[\tau_R\leq s].
  \]
  Indeed, by weak--strong uniqueness,
  \[
    \begin{aligned}
      P(s,x,B)
        & =  \E^{\Prob_x}[\xi_s\in B, \tau_R>s]
           + \E^{\Prob_x}[\xi_s\in B, \tau_R\leq s]\\
        & =  P_R(s,x,B)
           + \E^{\Prob_x}[\xi_s\in B, \tau_R\leq s]
           - \E^{\Prob_x^R}[\xi_s\in B, \tau_R\leq s].
    \end{aligned}
  \]
  Hence the first side of the inequality holds. The other side follows
  in the same way.

  \emph{Step 3}. We prove that the lemma holds if the initial condition
  is in $D(A)$. Let $B$ be such that $\leb_d(B)=0$, hence
  $P_R(t,x,f^{-1}(B))=0$ for all $t>0$, $x\in D(A)$ and $R\geq1$,
  then
  \[
    \begin{aligned}
      P(t+\epsilon,x,f^{-1}(B))
        &= \int P(\epsilon,y,f^{-1}(B))\,P(t,x,dy)\\
        &\leq 2\int_{\{\|Ay\|_H<R\}} \Prob_x[\tau_R\leq\epsilon]\,P(t,x,dy)
              + 2 P(t,x,\{\|Ay\|_H\geq R\}),
    \end{aligned}
  \]
  Since by \cite[Proposition 11]{FlaRom07}, $\Prob_x[\tau_R\leq s]\downarrow0$
  as $s\downarrow 0$ if $\|Ax\|_H<R$, by first taking the limit as
  $\epsilon\downarrow0$ and then as $R\uparrow\infty$, we deduce, using also
  the first step, that $P(t+\epsilon,x,f^{-1}(B))\to0$ as $\epsilon\downarrow0$.
  On the other hand, by \cite[Lemma 3.1]{Rom08a},
  $P(t+\epsilon,x,f^{-1}(B))\to P(t,x,f^{-1}(B))$, hence
  $P(t,x,f^{-1}(B))=0$.
  
  \emph{Step 4}. We finally prove that the lemma holds with initial
  conditions in $H$.
  We know that $P(t,x,f^{-1}(B)) = 0$ for all $t>0$ and
  $x\in D(A)$ if $\leb_d(B)=0$. If $x\in H$ and $s>0$ is a time such
  that the {a.~s.} Markov property holds, then
  \[
    P(t,x,f^{-1}(B))
      = \int P(t-s,y,f^{-1}(B))\,P(s,x,dy)
      = 0,
  \]
  since $P(s,x,D(A))=1$ by the first step.
\end{proof}
\subsection{Absolute continuity for the truncated problem}
\label{s2.2}

We now show that the law of $f(u_R(t;x))$ has a density with respect
to the Lebesgue measure on $\R^d$ for every $R>0$, $x\in D(A)$ and
$t>0$. We use Theorem~2.1.2 of \cite{Nua06}
  
Let $x\in D(A)$, $t>0$ and $R>0$. It is standard to prove that
$u_R(t;x)$ has Malliavin derivatives and that
$\malder^s u_R(t;x)\cdot q_k=\sigma_k \eta_k(t,s;x)$,
for $s\leq t$, where $\eta_k$ is the solution of
\begin{equation}\label{e:eta}
  \begin{cases}
    d_t\eta_k
      + \nu A\eta_k
      + DB_R(u_R)\eta_k
      = 0,\\
    \eta_k(s,s;x) = q_k,
  \end{cases}
  \quad t\geq s
\end{equation}
$B_R$ is defined in \eqref{e:BR}, so that its derivative along
a direction $\theta$ is given as
\[
  DB_R(v)\theta
    =   \chi_R(\|Av\|_H^2)\bigl(B(\theta,v)+B(v,\theta)\bigr)
      + 2\chi_R'(\|Av\|_H^2)\scal{Av,A\theta}_H B(v,v),
\]
and we recall that $(q_k,\sigma_k)_{k\in\N}$ is the system
of eigenvectors and eigenvalues of the covariance $\cov$ of the
noise. Standard estimates imply that 
\[
  u_R(t;x)\in
    \mathbb{D}^{1,2}(D(A))
      = \Bigl\{u:\E\bigl[\|Au\|_H^2\bigr]
          +\sum_{k=0}^\infty\E\int_0^t\|\malder^s u\cdot q_k\|_{D(A)}^2<\infty
        \Bigr\}.
  \]
Moreover, for every $k\in\N$ and $x\in D(A)$, the function $\eta_k(t,s;x)$
is continuous in both variables $s\in [0,\infty)$ and $t\in[s,\infty)$.

By the chain rule for Malliavin derivatives, the Malliavin matrix
$\mathcal{M}^f(t)$ of $f(u_R(t;x))$ is then given by
\[
  \begin{aligned}
    \mathcal{M}_{ij}^f(t)
      & = \sum_{k=1}^\infty \int_0^t
            \bigl(Df_i(u_R(t;x))\malder^s u_R(t;x)\cdot q_k\bigr)
            \bigl(Df_j(u_R(t;x))\malder^s u_R(t;x)\cdot q_k\bigr)\,ds\\
      & = \sum_{k=1}^\infty \sigma_k^2\int_0^t
            \bigl(Df_i(u_R(t;x))\eta_k(t;s,x)\bigr)
            \bigl(Df_j(u_R(t;x))\eta_k(t;s,x)\bigr)\,ds
  \end{aligned}
\]
for $i,j=1,\dots,d$, where $f=(f_1,\dots,f_d)$. 

To show that $\mathcal{M}^f(t)$ is invertible {a.~s.}, it is sufficient
to show that if $y\in \R^d$ and
\[
  \scal{\mathcal{M}^f(t)y,y}
    = \sum_{k=1}^\infty \sigma_k^2\int_0^t\Bigl|\sum_{i=1}^dDf_i(u_R(t;x))\eta_k(t;s,x)y_i\Bigr|^2\,ds
\]
is zero, then $y=0$. This is clearly true, since if
$\scal{\mathcal{M}^f(t)y,y}=0$, then
\[
  \sum_{i=1}^d y_i Df_i(u_R(t;x))\eta_k(t;s,x)
    = 0,
      \qquad\Prob-a.s.,
\]
for all $k\in\N$ and {a.~e.} $s\leq t$. By continuity, the above equality
holds for all $s\leq t$. In particular for $s=t$ this yields
\[
  \sum_{i=1}^d y_i Df_i(u_R(t;x))q_k
    = 0,
     \quad \Prob-a.s.,
\]
for all $k\geq1$. Under our assumptions on the covariance, the support 
of the law of $u_R(t;x)$ is the full space $D(A)$ (this follows from
Lemma C.2 and Lemma C.3 of \cite{FlaRom08}). Hence, $u_R(t;x)$ belongs
to the set of non singular points of $f$ with positive probability. 
We know that $(q_k)_{k\geq1}$ is a basis of $H$, hence the family
of vectors $(Df_1(u_R(t;x))q_k,\dots,Df_d(u_R(t;x))q_k)_{k\geq1}$
spans all $\R^d$ with positive probability, and in conclusion $y = 0$.
\subsection{Proof of Theorem~\ref{t:markov}}
\label{s2.3}

Fix an initial condition $x\in H$ and a time $t>0$, and consider a
finite--dimensional map $f:D(A)\to\R^d$ satisfying the assumptions
of Theorem~\ref{t:markov}. Gathering the two previous sections, we know that
$f(u(t;x))$ has a density with respect to the Lebesgue measure on $\R^d$.

Finally, to prove that the density is almost everywhere positive, it is
sufficient to recall that under the assumptions of the theorem each
Markov solution is irreducible (see for instance \cite{Fla97} or
\cite[Proposition 6.1]{FlaRom08}).
\section{Existence of densities with non--degenerate noise: Girsanov approach}
  \label{s:girsanov}

In this section we shall consider the following assumptions for the noise.
\begin{assumption}\label{a:girsanovnoise}
  We assume that
  \begin{itemize}
    \item the covariance $\cov\in\mathcal{L}(H)$ is of trace--class,
    \item $\ker(\cov) = \{0\}$.
  \end{itemize}
\end{assumption}
We consider solutions of  \eqref{e:nse} obtained by Galerkin approximations.
Given an integer $N\geq 1$, consider the sub--space
$H_N = \Span[e_1,\dots,e_N]$ and denote by $\pi_N = \pi_{H_N}$
the projection onto $H_N$. It is standard (see for instance \cite{Fla08})
to verify that the problem
\begin{equation}\label{e:galerkin}
  du^N + \bigl(\nu Au^N + B^N(u^N))\,dt = \pi_N\cov^{\frac12}dW,
\end{equation}
where $B^N(\cdot) = \pi_N B(\pi_N\cdot)$, admits a unique strong
solution for every initial condition $x^N\in H_N$. Moreover,
\begin{equation}\label{e:Gbound}
  \E\Bigl[\sup_{[0,T]}\|u^N\|_H^p\Bigr]
    \leq c_p(1+\|x^N\|_H^p),
\end{equation}
for every $p\geq1$ and $T>0$, where $c_p$ depends
only on $p$, $T$ and the trace $\sigma^2$.

If $x\in H$, $x^N = \pi_N x$ and $\Prob^N_x$ is the distribution
of the solution of the problem above with initial condition $x^N$,
then any limit point of $(\Prob^N_x)_{N\geq1}$ is a solution
of the martingale problem associated to \eqref{e:nse} with initial
condition $x$.

We prove the following result.
\begin{theorem}\label{t:girsanov}
  Fix an initial condition $x\in H$ and let $F$ be a finite dimensional
  subspace of $D(A)$ generated by the eigenvalues of $A$, namely
  $F = \Span[e_{n_1},\dots,e_{n_F}]$ for some arbitrary indexes
  $n_1,\dots,n_F$. Under Assumption~\ref{a:girsanovnoise} for every $t>0$
  the projection $\pi_F u(t)$ has a density with respect to the Lebesgue measure
  on $F$, where $u$ is \emph{any} solution of \eqref{e:nseabs} whose law
  is a limit point of the spectral Galerkin approximations defined above.
  Moreover the density is positive almost everywhere (with respect to the
  Lebesgue measure on $F$). 
\end{theorem}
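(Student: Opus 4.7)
The plan is to apply Girsanov's theorem to the Galerkin approximation \eqref{e:galerkin} for $N$ so large that $F\subset H_N$, establish absolute continuity of $\pi_F u^N(t)$ with estimates uniform in $N$, and then pass to the weak limit along the subsequence producing the law of $u$. The crucial algebraic observation is that $\pi_F\cov\pi_F$ is invertible as a self--adjoint operator on the finite dimensional space $F$: if $y\in F$ satisfies $\scal{\pi_F\cov\pi_F y,y}=\|\cov^{1/2}y\|_H^2=0$, then $y=0$ by Assumption~\ref{a:girsanovnoise}. Consequently $\cov^{1/2}\pi_F(\pi_F\cov\pi_F)^{-1}$ is a bounded right inverse of $\pi_F\cov^{1/2}\colon H\to F$.

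For fixed $N$ and $R>0$ I would introduce the stopping time $\tau_R^N=\inf\{s\ge 0:\|u^N(s)\|_H\ge R\}$ and the adapted $H$--valued drift compensator
\[
  \theta(s)
    = -\cov^{1/2}\pi_F(\pi_F\cov\pi_F)^{-1}\pi_F\bigl(\nu Au^N(s)+B^N(u^N(s))\bigr)\uno_{\{s\le\tau_R^N\}}.
\]
Since $F$ is spanned by eigenvectors of $A$, the vector $\pi_F A u^N=A\pi_F u^N$ has norm controlled by $\|u^N\|_H$, and the antisymmetry \eqref{e:B} applied with the smooth test function $e_{n_j}\in F$ gives $|\scal{B^N(u^N),e_{n_j}}|\le\|u^N\|_H^2\|\nabla e_{n_j}\|_{L^\infty}$, so that $\|\theta\|_H\le c_{F,R}$ on $[0,\tau_R^N]$. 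Novikov's condition is then immediate, and Girsanov produces a probability $\widetilde\Prob^{N,R}$, equivalent to $\Prob$, under which $\widetilde W:=W-\int_0^{\cdot}\theta\,dr$ is a cylindrical Wiener process on $H$. Inserting $dW=d\widetilde W+\theta\,ds$ in \eqref{e:galerkin} and projecting onto $F$ cancels the drift on the stochastic interval $[0,\tau_R^N]$, leaving
\[
  \pi_F u^N(t) = \pi_F x^N + \pi_F\cov^{1/2}\widetilde W(t)
    \qquad\text{on }\{\tau_R^N>t\},
\]
and under $\widetilde\Prob^{N,R}$ the right hand side is a non--degenerate Gaussian vector on $F$ with covariance $t\,\pi_F\cov\pi_F$.

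Given a Borel set $E\subset F$ with $\leb_F(E)=0$, the Gaussian vector on the right hand side lies in $E$ with zero $\widetilde\Prob^{N,R}$--probability, hence $\widetilde\Prob^{N,R}(\pi_F u^N(t)\in E,\tau_R^N>t)=0$; equivalence of the two measures transfers this to $\Prob$, and \eqref{e:Gbound} yields
\[
  \Prob(\pi_F u^N(t)\in E) \le \Prob(\tau_R^N\le t) \le \frac{c_p(1+\|x\|_H^p)}{R^p}
\]
uniformly in $N$, for every $p\ge 1$. Letting $R\uparrow\infty$ gives absolute continuity for $\pi_F u^N(t)$; Portmanteau applied to closed null sets, combined with inner regularity of the limit law on $F$, then extends this to $\pi_F u(t)$. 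For the a.e.\ positivity of the density I would use that $\pi_N\cov\pi_N$ is invertible on $H_N$ by the same algebraic argument, so the Galerkin system has non--degenerate noise on $H_N$ and a Stroock--Varadhan type support theorem yields irreducibility of $u^N$; combined with the strictly positive Gaussian density furnished by the Girsanov representation, this produces a lower bound on the density of $\pi_F u^N(t)$ on every compact subset of $F$, stable under the limit $N\to\infty$. The main technical obstacle is precisely the interplay between Novikov's condition, which forces the stopping time localisation, the uniform moment bound \eqref{e:Gbound}, and the weak limit in $N$.
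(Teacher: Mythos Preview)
Your Girsanov argument at the Galerkin level is sound; the localisation by $\tau_R^N$ together with Novikov is a legitimate alternative to the paper's use of \cite[Theorem~7.19]{LipShi01}, which avoids stopping times and compares $\pi_F u^N$ to the Ornstein--Uhlenbeck process \eqref{e:zetaF} rather than to pure Brownian motion on $F$. The genuine gap is the passage to the limit in $N$. Knowing that $\Prob(\pi_F u^N(t)\in E)=0$ for every $N$ and every Lebesgue--null $E$ is \emph{not} enough to deduce the same for the weak limit: Portmanteau for closed sets reads $\limsup_N\mu_N(K)\le\mu(K)$, which from $\mu_N(K)=0$ yields only the trivial $0\le\mu(K)$, and inner regularity of $\mu$ cannot reverse this. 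The obstruction is real---centred Gaussians on $\R$ with variance $1/N$ are all absolutely continuous yet converge weakly to $\delta_0$---so absolute continuity is simply not stable under weak limits without additional uniform control.

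What is missing is a \emph{quantitative} comparison, uniform in $N$, with a fixed reference measure on $F$. Your construction in fact contains the ingredients: since $\|\theta\|_H\le c_{F,R}$, the inverse Girsanov density has all moments under $\widetilde\Prob^{N,R}$ bounded by constants depending only on $R$, and H\"older's inequality gives, for every \emph{open} $J\subset F$,
\[
  \Prob\bigl(\pi_F u^N(t)\in J\bigr)
    \le C_R\,\gamma(J)^{1/2} + \Prob(\tau_R^N\le t),
\]
where $\gamma$ is the fixed Gaussian on $F$ with mean $\pi_F x$ and covariance $t\,\pi_F\cov\pi_F$ (hence bounded density). Now Portmanteau for open sets, in the useful direction $\mu(J)\le\liminf_N\mu_N(J)$, transfers this to the limit; shrinking $\leb_F(J)\downarrow 0$ by outer regularity and then sending $R\uparrow\infty$ gives $\mu(E)=0$. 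This is exactly the mechanism the paper implements: it carries the Girsanov density through the limit via Skorokhod and Fatou, and squeezes the null set by open supersets of small $z^F$--measure. Your positivity sketch is also rather vague compared with the paper's direct appeal to \cite{Shi07}.
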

\begin{proof}
  Fix $x\in H$ and let $u$ be a weak martingale solution of \eqref{e:nseabs}
  with distribution $\Prob_x$ and assume $\Prob^{N_k}_x\rightharpoonup\Prob_x$,
  where $N_k\uparrow\infty$ is a sequence of integers and for each $k$,
  $\Prob_x^{N_k}$ is solution of \eqref{e:galerkin}. Fix a time $t>0$ and
  consider the Galerkin approximation \eqref{e:galerkin} at level $N\geq n_F$.
  
  \emph{Step 1: the Girsanov density}.
  We are going to use Girsanov's theorem in the version given in
  \cite[Theorem 7.19]{LipShi01}. Let $v^N$ be the solution of
  \[
    dv^N + \bigl(\nu A v^N + B^N(v^N) - \pi_F B^N(v^N)\bigr)\,dt
      = \pi_N \cov^{\frac12}\,dW,
  \]
  with the same initial condition as $u^N$. We notice in particular that
  the projection of $v^N$ on $F$ solves a linear equation (see \eqref{e:zetaF}
  below) which is decoupled from $v^N - \pi_F v^N$. Moreover, it is easy
  to prove, with essentially the same methods that yield \eqref{e:Gbound},
  that
  \begin{equation}\label{e:Gbound2}
   \E\Bigl[\sup_{[0,T]}\|v^N\|_H^p\Bigr]<\infty.
  \end{equation}
  Note that $\sup_{\|w\|_{W^{1,\infty}}=1}\scal{v,w}$ is a norm
  on $\pi_N H$, which is therefore equivalent to the norm of $H$ on
  $\pi_NH$. We can then write: 
  \[
    \scal{\pi_N B(v),w}
      = - \scal{B(v,\pi_N w),v}
      \leq  c \|w\|_{W^{1,\infty}} \|v\|_H^2,
  \]
  therefore
  \begin{equation}\label{e:maintruc}
    \|\pi_N B(v)\|_H
      \leq c_N \|v\|_H^2,
        \qquad v\in H.
  \end{equation}
  Since the covariance $\cov^{\frac12}$ is invertible on $\pi_NH$,
  we deduce from \eqref{e:Gbound}, \eqref{e:Gbound2} that
  \[
    \int_0^t \|\cov^{-\frac12}\pi_N B(u^N)\|_H^2\,ds<\infty,
      \qquad 
    \int_0^t \|\cov^{-\frac12}\pi_N B(v^N)\|_H^2\,ds<\infty.
      \qquad \Prob-a.s.
  \]
  By Theorem 7.19 of \cite{LipShi01} the process 
  \begin{equation}\label{e:girsanov}
    G_t^N
      = \exp\Bigl(\int_0^t \scal{\cov^{-\frac12}\pi_F B(u^N), dW_s}
                -\frac12\int_0^t\|\cov^{-\frac12}\pi_F B(u^N)\|_H^2\,ds\Bigr),
  \end{equation}
  is positive, finite $\Prob$--{a.~s.} and a martingale. Moreover, under the
  probability measure $\widetilde\Prob_N(d\omega) =G_t^N \Prob_N(d\omega)$
  the process
  \[
    \widetilde W_t = W_t - \int_0^t \cov^{-\frac12}\pi_F B(u^N)\,ds
  \]
  is a cylindrical Wiener process on $H$ and $\pi_F u^N(t)$ has the same
  distribution as the solution $z^F$ of the linear problem
  \begin{equation}\label{e:zetaF}
    dz^F + \nu A z^F = \pi_F \cov^{\frac12}\,dW,
  \end{equation}
  which is independent of $N$. In particular for every measurable
  $E\subset F$,
  \[
    \Prob_N[z^F(t)\in E]
      = \widetilde\Prob_N[\pi_F u^N(t)\in E]
      = \E^{\Prob_N}\bigl[G_t^N\uno_E(\pi_F u^N(t))\bigr]
  \]
  and if $\leb_F(E)=0$, where $\leb_F$ is the Lebesgue measure on $F$,
  then $\uno_E(z^F(t))=0$. Since $G_t^N>0$, $\Prob_N$--{a.~s.},
  we have that $\uno_E\bigl(\pi_F u^N(t)\bigr)=0$, $\Prob_N$--{a.~s.},
  that is $\Prob_N[\pi_F u^N(t)\in E]=0$. In conclusion $\pi_F u^N(t)$
  has a density with respect to the Lebesgue measure on $F$.

  \emph{Step 2: passage to the limit.}
  Now consider the weak martingale solution $u$ of the infinite dimensional
  problem~\eqref{e:nseabs}. We show that $G_t^{N_k}$ is convergent. By possibly
  changing the underlying probability space and the driving Wiener process
  via the Skorokhod theorem, we can assume that there is a sequence of processes
  $(u^{N_k})_{k\geq1}$ such that $u^{N_k}$ has law $\Prob_{N_k}$ and
  $u^{N_k}\to u$ {a.~s.} in $C([0,T];H_w)$ --- where $H_w$ is the space $H$
  with the weak topology --- and in $L^2(0,T;H)$ for every $T>0$.
  In particular the sequence $(u^{N_k})_{k\geq1}$ is {a.~s.} bounded in
  $L^\infty(0,T;H)$  and thus {a.~s.} strongly convergent in $L^p(0,T;H)$
  for every $T>0$ and every $p<\infty$. This ensures that $G_t^{N_k}\to G_t$,
  {a.~s.}, for every $t$, where $G_t$ is the same as in~\eqref{e:girsanov}
  for $u$.
  
  Moreover, we notice that $G_t>0$ and finite {a.~s.}, since by
  \eqref{e:maintruc} and \eqref{e:Gbound},
  \[
    \frac12\int_0^t\|\cov^{-\frac12}\pi_F B(u)\|_H^2\,ds<\infty,
      \qquad\text{a.~s.}
  \]
  on the limit solution.

  \emph{Step 3: conclusion.}
  We show that $\pi_F u(t)$ has a density with respect to the
  Lebesgue measure on $F$. Let $E\subset F$ with $\leb_F(E)=0$, then
  for every open set $J$ such that $E\subset J$ we have by Fatou's lemma
  (notice that $\uno_J$ is lower semi--continuous with respect to the weak
  convergence in $H$ since $J$ is finite dimensional),
  \begin{multline*}
    \E[G_t\uno_E(\pi_F u(t))]
      \leq \E[G_t\uno_J(\pi_F u(t))] \leq\\
      \leq \liminf_N \E[G_t^N\uno_J(\pi_F u^N(t))]
      = \Prob[z^F(t)\in J],
  \end{multline*}
  hence $\E[G_t\uno_E(\pi_F u(t))]=0$ since $J$ can have arbitrarily small
  measure and $z^F(t)$ has Gaussian density. Again we deduce that
  $\Prob[\pi_F u(t)\in E] = 0$ from the fact that $G_t>0$.
  Finally, the fact that the density of $\pi_F u(t)$ is positive
  follows from the results of \cite{Shi07}. 
\end{proof}
\begin{remark}
  The bounds on the sequence $(G_t^N)_{N\geq1}$ are not strong enough to
  deduce a stronger convergence to $G_t$ and hence to deduce the representation
  \begin{equation}\label{e:Grep1}
    \E[\phi(z^F(t))]
      = \E\bigl[G_t\phi(\pi_F u(t))\bigr]
  \end{equation}
  in the limit, for smooth function $\phi:F\to\R$. Although this formula
  would provide a representation for the (unknown) density of $\pi_F u(t)$
  in terms of the (known) density of $z^F$, solution of \eqref{e:zetaF}, this
  would not characterise the law of $\pi_F u(t)$ by any means, since the
  factor $G_t$ which appears in the formula depends on the sub--sequence
  $(N_k)_{k\in\N}$ which ensures that $\Prob_{N_k}\rightharpoonup\Prob$.

  Vice versa, one could use the inverse density
  \[
    \widetilde G_t^N
      = \exp\Bigl(- \int_0^t \scal{\cov^{-\frac12}\pi_F B(v^N), dW_s}
           -\frac12\int_0^t\|\cov^{-\frac12}\pi_F B(v^N)\|_H^2\,ds\Bigr),
  \]
  which is also a martingale by \cite[Theorem 7.19]{LipShi01}, to get
  in the limit
  \begin{equation}\label{e:Grep2}
    \E[\phi(\pi_F u(t))]
      = \E\bigl[\widetilde G_t\phi(z^F(t))\bigr]
  \end{equation}
  but the bound~\eqref{e:Gbound2} for $v^N$ is not uniform in $N$.
\end{remark}
\begin{remark}\label{r:piutempi}
  Since our proof is based on Girsanov formula, it gives more information.
  In fact, it easily extends to show that for every $t_1,\dots,t_m$, the
  law of $(\pi_F u(t_1),\dots,\pi_F u(t_m))$ has a density with respect
  to the Lebesgue measure on $F\times\dots\times F$.
  \end{remark}
\begin{remark}\label{r:relaxed}
  In the proof of the above theorem we have actually used that the
  covariance is invertible \emph{only on} $F$. Indeed Assumption
  \eqref{a:girsanovnoise} is too restrictive and the second property
  can be replaced by the following weaker property,
  \begin{itemize}
    \item $F\cap\ker(\cov) = \{0\}$,
  \end{itemize}
  which allows a degenerate covariance.
\end{remark}
\section{Existence of densities with non--degenerate noise: bounds in Besov spaces}
  \label{s:besov}

We now show that the density found in the previous theorem has a little bit
more regularity than the one provided by the Radon--Nykodym theorem. At the
same time we provide an alternative proof of existence of the density, which
is based on an idea of~\cite{FouPri10}. We work again under the
Assumption~\ref{a:girsanovnoise}, although again Remark~\ref{r:relaxed} may
apply.

We prove in fact that the density belongs to a suitable Besov space. 
A general definition of Besov spaces $B_{p,q}^s(\R^d)$ is given by means of
Littlewood--Paley's decomposition. Here we use the equivalent definition
given in \cite[Theorem 2.5.12]{Tri83} or \cite[Theorem 2.6.1]{Tri92}
in terms of differences. Define
\[
  \begin{gathered}
    (\Delta_h^1f)(x)
      = f(x+h)-f(x),\\
    (\Delta_h^nf)(x)
      = \Delta_h^1(\Delta_h^{n-1}f)(x)
      = \sum_{j=0}^n (-1)^{n-j}\binom{n}{j} f(x+jh)
  \end{gathered}
\]
then the following norms, for $s>0$, $1\leq p\leq\infty$, $1\leq q<\infty$,
\[
  \|f\|_{B_{p,q}^s}
    = \|f\|_{L^p}
      + \Bigl(\int_{\{|h|\leq 1\}}\frac{\|\Delta_h^n f\|_{L^p}^q}{|h|^{sq}}
          \frac{dh}{|h|^d}\Bigr)^{\frac1q}
\]
and for $q=\infty$,
\[
  \|f\|_{B_{p,\infty}^s}
    = \|f\|_{L^p}
      + \sup_{|h|\leq 1}\frac{\|\Delta_h^n f\|_{L^p}}{|h|^\alpha},
\]
where $n$ is any integer such that $s<n$, are equivalent norms of $B_{p,q}^s(\R^d)$
for the given range of parameters. Note that if $1\leq p<\infty$ and $s>0$
is not an integer, then $B^s_{p,p}(\R^d) =  W^{s,p}(\R^d)$
(this is formula 2.2.2/(18) of \cite{Tri83}). We refer to \cite{Tri83,Tri92}
for a general introduction on these spaces, for their properties
and for further details on the topic.
\subsection{Besov regularity of the densities}

We prove the following result. 
\begin{theorem}\label{t:besov1}
  Fix an initial condition $x\in H$ and let $F$ be a finite dimensional subspace
  of $D(A)$ generated by the eigenvectors of $A$, namely
  $F = \Span[e_{n_1}, \dots,e_{n_F}]$ for some arbitrary indices
  $n_1,\dots,n_F$. Under Assumption \ref{a:girsanovnoise}, for every $t>0$ the
  projection $\pi_F u(t)$ has an almost everywhere positive density $f_{F,t}$ with respect
  to the Lebesgue measure on $F$, where $u$ is any solution of \eqref{e:nseabs}
  which is limit point of the spectral Galerkin approximations \eqref{e:galerkin}.

  Moreover $f_{F,t}\in B_{1,\infty}^s(\R^d)$, hence
  $f_{F,t}\in W^{s,1}(\R^d)$, for every $s\in(0,1)$, and
  $f_{F,t}\in L^p(\R^d)$ for any $p\in [1,\tfrac{d}{d-1})$,
  where $d = \dim F$.
\end{theorem}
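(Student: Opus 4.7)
The plan is to use a Fournier--Printems style criterion: an $\R^d$-valued random vector $X$ has a density in $B^s_{1,\infty}(\R^d)$ provided
\[
  \bigl|\E[\phi(X+h)-\phi(X)]\bigr|
    \leq C\,\|\phi\|_\infty\,|h|^s
\]
for every bounded Borel $\phi:\R^d\to\R$ and every $h$ with $|h|\leq 1$. By duality this is a H\"older bound on the total-variation distance of translates $\mu_X*\delta_h-\mu_X$, and yields a density in $B^s_{1,\infty}$ via a standard mollification/weak-compactness argument combined with the differences characterization of the norm recalled before the statement. I would establish such an increment bound at the Galerkin level, with constants uniform in $N$, and transfer it to $\pi_F u(t)$ along the subsequence $(N_k)$: the a.s.\ convergence $\pi_F u^{N_k}(t)\to \pi_F u(t)$ obtained via Skorokhod in the proof of Theorem~\ref{t:girsanov} first passes the bound to continuous bounded $\phi$, and then a monotone class argument extends it to Borel bounded $\phi$.

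For the main estimate, fix $\tau\in(0,t)$ (to be optimized) and split the Brownian window as $[0,t-\tau]\cup[t-\tau,t]$, writing
\[
  \pi_F u^N(t)
    = Y^N_\tau - R^N_\tau,
  \qquad
  Y^N_\tau := \pi_F u^N(t-\tau) + \pi_F\cov^{\frac12}(W_t-W_{t-\tau}),
\]
with nonlinear remainder $R^N_\tau := \int_{t-\tau}^t \pi_F(\nu A u^N+B^N(u^N))\,ds$. Conditionally on $\mathcal F_{t-\tau}$, the vector $Y^N_\tau$ is Gaussian with mean $\pi_F u^N(t-\tau)$ and covariance $\tau\,\pi_F\cov\pi_F$, non-degenerate on $F$ by Assumption~\ref{a:girsanovnoise} and Remark~\ref{r:relaxed}. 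Exploiting the finite-dimensionality of $F$, the bound \eqref{e:maintruc} and the moment estimate \eqref{e:Gbound}, one gets $\E[|R^N_\tau|^p]\lesssim_p \tau^p$ uniformly in $N$ for every $p\geq 1$.

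The core estimate is then
\[
  \bigl|\E[\phi(\pi_F u^N(t)+h)-\phi(\pi_F u^N(t))]\bigr|
    \leq C\,\|\phi\|_\infty\bigl(|h|/\sqrt\tau+\tau^\alpha\bigr),
\]
for an exponent $\alpha>0$ that can be made arbitrarily large by exploiting higher moments of $R^N_\tau$; optimising $\tau=|h|^{2/(1+2\alpha)}$ gives $|h|^{2\alpha/(1+2\alpha)}$, arbitrarily close to $|h|^1$. The first summand is the standard $L^1$ estimate on the translation, by $h$, of a Gaussian density with variance of order $\tau$. The subtle point is that $Y^N_\tau$ and $R^N_\tau$ are \emph{not} independent given $\mathcal F_{t-\tau}$, since both are functionals of the Brownian increment on $[t-\tau,t]$. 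To decouple them I would apply Girsanov on the short interval $[t-\tau,t]$ exactly as in Section~\ref{s:girsanov}, removing $\pi_F B^N(u^N)$: under the new measure, $\pi_F u^N(t)$ is conditionally on $\mathcal F_{t-\tau}$ a Gaussian Ornstein--Uhlenbeck evaluation, and the Girsanov density (and its inverse) have moments uniformly bounded in $N$ thanks to \eqref{e:maintruc} and \eqref{e:Gbound}, which allows the Gaussian estimate to be transferred back to $\Prob$.

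Once the increment bound is passed to the limit, $f_{F,t}\in B^s_{1,\infty}(\R^d)$ for every $s<1$. The embedding $B^s_{1,\infty}\hookrightarrow B^{s'}_{1,1}=W^{s',1}$ for $s'<s$ (using the identification $B^s_{p,p}=W^{s,p}$ for non-integer $s$ recalled before the statement) yields the $W^{s,1}$ assertion, and the $L^p$ integrability for $p\in[1,d/(d-1))$ follows from the fractional Sobolev embedding $W^{s,1}(\R^d)\hookrightarrow L^{d/(d-s)}(\R^d)$ by letting $s\uparrow 1$; positivity of the density is inherited from Theorem~\ref{t:girsanov}. The principal obstacle is the decorrelation step above: extracting a clean Gaussian regularization of $\pi_F u^N(t)$ from a decomposition in which the Gaussian increment and the nonlinear remainder are driven by the same Brownian piece, while showing that the Girsanov reweighting does not spoil the crucial $|h|/\sqrt\tau$ gain, uniformly in $N$.
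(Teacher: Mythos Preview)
Your overall strategy---Fournier--Printems smoothing on a short terminal window, uniform in the Galerkin level---is the right one, and coincides with what the paper does. But the core inequality you write,
\[
  \bigl|\E[\phi(\pi_F u^N(t)+h)-\phi(\pi_F u^N(t))]\bigr|
    \leq C\,\|\phi\|_\infty\bigl(|h|/\sqrt\tau+\tau^\alpha\bigr),
\]
cannot be obtained from your decomposition with merely bounded $\phi$. The $\tau^\alpha$ term is supposed to come from the smallness of $R^N_\tau$; but smallness of $\E[|R^N_\tau|^p]$ gives nothing against a bounded test function---you would need total-variation closeness of the laws of $Y^N_\tau$ and $Y^N_\tau-R^N_\tau$, and that is not a consequence of moment bounds on $R^N_\tau$. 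The Girsanov reweighting you propose does not rescue this: once you write $\E[\Delta_h\phi(\pi_F u^N(t))]=\widetilde\E[G^{-1}\Delta_h\phi(\pi_F u^N(t))]$, the weight $G^{-1}$ depends on the full path $u^N$ on $[t-\tau,t]$, so the discrete integration by parts against the conditional Gaussian density of $\pi_F u^N(t)$ under $\widetilde\Prob$ is blocked; and any H\"older/Cauchy--Schwarz splitting destroys the $|h|/\sqrt\tau$ cancellation. Your ``principal obstacle'' is thus not merely technical: as stated, the scheme does not close.

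The paper avoids the decorrelation issue altogether by two devices. First, instead of Girsanov it builds an auxiliary process $u^{N,\epsilon}$ that coincides with $u^N$ up to time $1-\epsilon$ and on $[1-\epsilon,1]$ has the $\pi_F B$ drift removed; then $\pi_F u^{N,\epsilon}(1)$ is \emph{genuinely} conditionally Gaussian given $\mathcal F_{1-\epsilon}$, with no change of measure. The difference $\pi_F(u^N(1)-u^{N,\epsilon}(1))=-\int_{1-\epsilon}^1 e^{-\nu A(1-s)}\pi_F B(u^N(s))\,ds$ is controlled pathwise by~\eqref{e:maintruc} and~\eqref{e:Gbound}. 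Second---and this is the point you are missing---the comparison term is estimated against \emph{H\"older} test functions, $[\phi]_\alpha\,\E[|\pi_F(u^N-u^{N,\epsilon})(1)|^\alpha]\leq c[\phi]_\alpha\epsilon^\alpha$, while the Gaussian term uses $n$-th order differences to get $c\|\phi\|_\infty\epsilon^{-n/2}|h|^n$. Optimizing $\epsilon$ gives a bound $c\|\phi\|_{C^\alpha}|h|^{\alpha_n}$ with $\alpha_n=\tfrac{2\alpha n}{2\alpha+n}\to 2\alpha$. The passage from $\|\phi\|_{C^\alpha}$ back to $\|\phi\|_{L^\infty}$ is done by a Bessel-potential duality trick: test with $\phi=(I-\Delta_d)^{-\beta/2}\psi$, $\beta>\alpha$, use $\|\phi\|_{C^\alpha}\lesssim\|\psi\|_{L^\infty}$, and conclude that $(I-\Delta_d)^{-\beta/2}f_N$ is bounded in $B^{\alpha_n}_{1,\infty}$, hence $f_N$ is bounded in $B^{\alpha_n-\beta}_{1,\infty}$; letting $n\to\infty$ and $\beta\downarrow\alpha$ covers all exponents in $(0,1)$. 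Replacing your $\|\phi\|_\infty$ by $\|\phi\|_{C^\alpha}$ and your Girsanov step by the auxiliary process $u^{N,\epsilon}$ turns your outline into the paper's proof.
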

\begin{proof}
  Let $u$ be a weak martingale solution of~\eqref{e:nseabs} with initial
  condition $x$ and distribution $\Prob_x$,
  and assume $\Prob_x^{N_k}\rightharpoonup\Prob_x$, where $N_k\uparrow\infty$
  is a sequence of integers and for each $k$ the probability measure $\Prob_x^{ N_k}$
  is a weak martingale solution of \eqref{e:galerkin} with initial condition
  $\pi_{N_k}x$. Given a finite dimensional
  space $F = \Span[e_{n_1}, \dots,e_{n_F}]$ and a time $t$, which without loss
  of generality is taken equal to $t=1$, we wish to show that the random
  variable $\pi_F u(1)$ has a density with respect to the Lebesgue measure
  on $F$ (which we identify with $\R^d$, $d = \dim F$). We first notice
  that, again by the results of \cite{Shi07}, the density will be positive
  almost everywhere.
  
  For $N\geq n_F$, let $f_N$ be the density of the random variable
  $\pi_F u^N(1)$, where $u^N$ is the solution of~\eqref{e:galerkin}.
  The existence of $f_N$ is easy to prove under Assumption
  \ref{a:girsanovnoise}. For every $\epsilon<1$, denote
  by $\eta_\epsilon = \uno_{[0,1-\epsilon]}$ the indicator function of the
  interval $[0,1-\epsilon]$. Denote by $u^{N,\epsilon}$ the solution of
  \[
    d u^{N,\epsilon}
     + \bigl(\nu A u^{N,\epsilon}
     + B(u^{N,\epsilon})
     - (1-\eta_\epsilon) \pi_F B(u^{N,\epsilon})\bigr)\,dt
     = \pi_N \cov^{\frac12}\,dW,
  \]
  where $\pi_N$ is the projection onto $\Span[e_1,\dots,e_N]$, and notice that
  $u^{N,\epsilon}(s) = u^N(s)$ for $s\leq t-\epsilon$. Moreover for
  $t\in [1-\epsilon,1]$, $v=\pi_F u^{N,\epsilon}$ satisfies
  \begin{equation}\label{e:justbefore}
    \begin{cases}
      dv + \nu \pi_F A v\,dt = \pi_F \cov^{\frac12}\,dW,\\
      v(1-\epsilon)=\pi_F u^{N,\epsilon}(1-\epsilon).
    \end{cases}
  \end{equation}
  Therefore, conditioned to $\mathcal{F}_{1-\epsilon}$,
  $\pi_F u^{N,\epsilon}(1)$ is a Gaussian random variable with covariance 
  \[
    Q_F
      = \int_0^\epsilon \pi_F\e^{\nu\pi_F A s}\cov\e^{\nu\pi_F A s}\pi_F\,ds
  \]
  and mean $\pi_F u^{N,\epsilon}(1-\epsilon)$. We denote by $g_{\epsilon,N}$ its
  density with respect to the Lebesgue measure. Since $Q_F$ is bounded and invertible
  on $F$ and its eigenvalues are all of order $\epsilon$, it is easy to see, by
  a simple change of variable (to get rid of the random mean and to extract the
  behaviour in $\epsilon$) and the smoothness of the Gaussian density, that
  \[
   \|g_{\epsilon,N}\|_{B^n_{1,1}}
     \leq c \epsilon^{-\frac{n}2},
  \]
  holds almost surely with a deterministic constant $c>0$.
  
  Fix $\phi\in C^\infty_0(\R^d)$, $n\geq1$ and $h\in\R^d$ with $|h|<1$, then
  \begin{equation}\label{e:split}
    \begin{aligned}
      \E[(\Delta_h^n\phi)(\pi_F u^N(1))]
        &=       \E[(\Delta_h^n\phi)(\pi_F u^N(1)) - (\Delta_h^n\phi)(\pi_F u^{N,\epsilon}(1))]\\
        &\quad + \E[(\Delta_h^n\phi)(\pi_F u^{N,\epsilon}(1))].
    \end{aligned}
  \end{equation}
  Consider the second term and use a discrete integration by parts
  \[
   \begin{aligned}
     \E[(\Delta_h^n\phi)(\pi_F u^{N,\epsilon}(1))]
     & =  \E\big[\E[(\Delta_h^n\phi)(\pi_F u^{N,\epsilon}(1))| \mathcal{F}_{1-\epsilon}]\big]\\
     & =  \E\Big[\int_{\R^d} \Delta_h^n\phi(x) g_{\epsilon,N}(x)\,dx\Big]\\
     & =  \E\Big[\int_{\R^d} \phi(x)\Delta_{-h}^n g_{\epsilon,N}(x)\,dx\Big]\\
     &\le \|\phi\|_{L^\infty}\|h\|^n\E[\|g_{\epsilon,N}\|_{B^n_{1,1}}]\\
     &\le c\|\phi\|_{L^\infty} \epsilon^{-\frac{n}2}\|h\|^n.
    \end{aligned}
  \]
  The first term of~\eqref{e:split} can be estimated as follows
  \[
    \begin{aligned}
      \lefteqn{\bigl|\E[(\Delta_h^n\phi)(\pi_F u^N(1)) - (\Delta_h^n\phi)(\pi_F u^{N,\epsilon}(1))]\bigr|\leq}\\
        &\qquad\leq\Bigl|\sum_{j=0}^n (-1)^{n-j}\binom{n}{j}\E\bigl[\phi\bigl(\pi_F u^N(1)+jh\bigr) - \phi\bigl(\pi_F u^{N,\epsilon}(1)+jh\bigr)\bigr]\Bigr|\\
        &\qquad\leq c[\phi]_\alpha \E\bigl[\|\pi_F\bigl(u^N(1) - u^{N,\epsilon}(1)\bigr)\|^\alpha\bigr],
    \end{aligned}
  \]
  where $[\phi]_\alpha$ is the H\"older semi-norm of $C^\alpha(\R^d)$, and $\alpha\in(0,1)$
  will be suitably chosen later. Since
  \[
    \pi_F\bigl(u^N(1) - u^{N,\epsilon}(1)\bigr)
      = -\int_{1-\epsilon}^1 \e^{-\nu A(1-s)}\pi_F B(u^N(s),u^N(s))\,ds,
  \]
  \eqref{e:maintruc} and \eqref{e:Gbound} yield
  \[
    \E\bigl[\|\pi_F\bigl(u^N(1) - u^{N,\epsilon}(1)\bigr)\|\bigr]
      \leq c_F \int_{1-\epsilon}^1 \E[\|u^N(s)\|_H^2]\,ds
      \leq c_F(\|x\|_H^2 + 1)\epsilon.
  \]
  
 Gathering the estimates of the two terms gives
  \[
    \bigl|\E[(\Delta_h^n\phi)(\pi_F u^N(1))]\bigr|
      \leq c[\phi]_\alpha \epsilon^\alpha + c\epsilon^{-\frac{n}{2}}\|h\|^n\|\phi\|_\infty
      =    c\|\phi\|_{C^\alpha}\|h\|^{\frac{2\alpha n}{2\alpha+n}}
  \]
  where the number $c$ is independent of $N$, and we have chosen
  $\epsilon = \|h\|^{\frac{2 n}{2\alpha+n}}$. By a discrete integration by
  parts,
  \[
    \bigl|\E[(\Delta_{-h}^n\phi)(\pi_F u^N(1))]\bigr|
      = \int_{\R^d} (\Delta_{-h}^n\phi)(x)f_N(x)\,dx
      = \int_{\R^d} (\Delta_h^n f_N)(x)\phi(x)\,dx
  \]
  (we have switched from $h$ to $-h$ for simplicity) and so we have proved
  that for every $h\in\R^d$ with $|h|\leq 1$,
  \begin{equation}\label{e:besovcond}
    \Bigl|\int_{\R^d} \phi(y)\frac{(\Delta_h^n f_N)(x)}{|h|^{\alpha_n}}\,dy\Bigr|
      \leq c\|\phi\|_{C^\alpha},
  \end{equation}
  with $\alpha_n = {\frac{2\alpha n}{2\alpha+n}}$. We wish to deduce from the
  above inequality the following claim:
  \begin{quotation}
    \emph{The sequence $(f_N)_{N\geq n_F}$ is bounded in $B_{1,\infty}^\gamma(\R^d)$
    for every $\gamma\in(0,1)$}.
  \end{quotation}
  Before proving the claim, we show how it immediately implies the statements
  of the theorem. Indeed, since $B_{1,\infty}^\gamma(\R^d)\hookrightarrow L^p(\R^d)$
  for every $p\in[1,\tfrac{d}{d-\gamma}]$ (by \cite[formula 2.2.2/(18)]{Tri83}
  and Sobolev's embeddings), it follows that the sequence $(f_{N_k})_{k\geq1}$
  is uniformly integrable and hence convergent to a positive function $f\in L^p(\R^d)$
  which is the density of $\pi_F u(1)$ (we recall here that
  $\Prob_{N_k}\rightharpoonup\Prob$, where $\Prob$ is the law
  of $u$, hence the limit is unique along the subsequence $(N_k)_{k\geq1}$).
  Moreover, since the bound in the claim is independent of $N$, it follows
  that $f\in B_{1,\infty}^\gamma(\R^d)$ for every $\gamma\in(0,1)$ and
  hence, using again the embedding of Besov spaces into Lebesgue spaces,
  $f\in L^p(\R^d)$ for every $p\in[1,\tfrac{d}{d-1})$.
  
  It remains to show the above claim. Let $\psi\in\mathcal{S}(\R^d)$, where
  $\mathcal{S}(\R^d)$ is the Schwartz space of smooth rapidly decreasing
  functions, and set $\phi = (I-\Delta_d)^{-\beta/2}\psi$, where $\Delta_d$
  is the Laplace operator on $\R^d$ and $\beta>\alpha$ will be suitably chosen
  later. Notice that since $C^\alpha(\R^d)=B_{\infty,\infty}^\alpha(\R^d)$
  \cite[Theorem 2.5.7, Remark 2.2.2/3]{Tri83} and since $(I-\Delta_d)^{-\beta/2}$
  is a continuous operator from $B_{\infty,\infty}^{\alpha-\beta}(\R^d)$ to
  $B_{\infty,\infty}^\alpha(\R^d)$ \cite[Theorem 2.3.8]{Tri83}, it follows that
  \[
    \|\phi\|_{C^\alpha}
      \leq c\|\phi\|_{B_{\infty,\infty}^\alpha}
      \leq c\|\psi\|_{B_{\infty,\infty}^{\alpha-\beta}}
      \leq c_0\|\psi\|_{L^\infty},
  \]
  where the last inequality follows from the fact that
  $L^\infty(\R^d)\hookrightarrow B_{\infty,\infty}^{\alpha-\beta}(\R^d)$,
  since $B_{\infty,\infty}^{\alpha-\beta}(\R^d)$ is the dual of
  $B^{\beta-\alpha}_{1,1}(\R^d)$ \cite[Theorem 2.11.2]{Tri83} and
  $B^{\beta-\alpha}_{1,1}(\R^d)\hookrightarrow L^1(\R^d)$ by definition,
  since $\beta>\alpha$.
  
  Let $g_N = (I-\Delta_d)^{-\beta/2}f_N$, then \eqref{e:besovcond} yields
  \[
    \Bigl|\int_{\R^d} \psi(y)(\Delta_h^n g_N)(y)\,dy\Bigr|
      \leq c_0 |h|^{\alpha_n}\|\psi\|_{L^\infty},
  \]
  hence $\Delta_h^n g_N\in L^1(\R^d)$ and
  \[
    \|\Delta_h^n g_N\|_{L^1}
      \leq c_0 |h|^{\alpha_n}.
  \]
  Moreover, by \cite[Theorem 10.1]{AroSmi61}, $\|g_N\|_{L^1}\leq c\|f_N\|_{L^1}=c$,
  hence $(g_N)_{N\geq n_F}$ is a bounded sequence in $B_{1,\infty}^{\alpha_n}(\R^d)$
  and, since $(I-\Delta_d)^{\beta/2}$ maps $B_{1,\infty}^\alpha(\R^d)$ continuously
  onto $B_{1,\infty}^{\alpha-\beta}(\R^d)$ \cite[Theorem 2.3.8]{Tri83}, it follows that
  $(f_N)_{N\geq n_F}$ is a bounded sequence in $B_{1,\infty}^{\alpha_n-\beta}(\R^d)$
  for every $\beta>\alpha$.

  We notice that by suitably choosing $n\geq 1$, $\alpha\in(0,1)$ and $\beta>\alpha$,
  the number $\alpha_n-\beta$ runs over all reals in $(0,1)$: this can be easily
  seen by noticing that $\alpha_n\to 2\alpha$ as $n\to\infty$. This proves the
  claim and consequently the whole theorem.
\end{proof}
\subsection{Additional regularity for stationary solutions}

We can slightly improve the regularity of densities if we consider a special
class of solutions, namely stationary solutions. Consider again problem
\eqref{e:galerkin}, it admits a unique invariant measure (see for instance
\cite{Fla08}, see also \cite{Rom04} for related results). Denote by $\Prob_N$
the law of the process started at the invariant measure. Every limit point
is a stationary solution of \eqref{e:nseabs}, that is a probability measure
which is invariant with respect to the forward time--shift (other methods
can be used to show existence of stationary solutions, see for instance
\cite{FlaGat95}).

The idea that stationary solutions may have better regularity properties
has been already exploited \cite{FlaRom02,Oda06}.
\begin{theorem}\label{t:besov2}
  Let $F$ be a finite dimensional subspace of $D(A)$ generated by the eigenvalues
  of $A$, namely $F = \Span[e_{n_1}, \dots,e_{n_F}]$ for some arbitrary indices
  $n_1,\dots,n_F$. Let $u$ be a stationary solution of~\eqref{e:nseabs} which
  is a limit point of a sequence of stationary solutions of the spectral Galerkin
  approximation. Under Assumption \ref{a:girsanovnoise}, the projection $\pi_F u(1)$
  has a density $f_F$ with respect to the Lebesgue measure on $F$,
  which is almost everywhere positive.

  Moreover $f_F\in B_{1,\infty}^s(\R^d)$, which in particular implies that
  $f_F\in W^{s,1}(\R^d)$ for every $s\in(0,2)$, where $d = \dim F$.
\end{theorem}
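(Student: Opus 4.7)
The plan is to follow the scheme of Theorem~\ref{t:besov1} almost line by line, exploiting stationarity of $u^N$ to enlarge the admissible range of H\"older exponents of the test function from $\alpha\in(0,1)$ to $\alpha\in(0,2)$. Since the Bessel--potential regularization in Theorem~\ref{t:besov1} yields a final Besov index $\alpha_n-\beta$ with $\alpha_n\to 2\alpha$ and $\beta$ slightly above $\alpha$, enlarging the range of admissible $\alpha$ by a factor of two directly doubles the range of admissible Besov indices $s$.

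First I would fix a stationary limit point $u^N$ of the Galerkin approximation; this makes the law of $\pi_F u^N(t)$ independent of time with a common density $f_N$, and gives uniform-in-$s$ bounds $\E[\|u^N(s)\|_H^p]=\mathrm{const}$ for every $p\geq 1$. I would then reuse verbatim the auxiliary process $u^{N,\epsilon}$ of Theorem~\ref{t:besov1}, whose $\pi_F$--projection after time $1-\epsilon$ solves~\eqref{e:justbefore}, so that the conditional density $g_{\epsilon,N}$ of $\pi_F u^{N,\epsilon}(1)$ given $\mathcal{F}_{1-\epsilon}$ is Gaussian and satisfies $\|g_{\epsilon,N}\|_{B_{1,1}^n}\leq c\epsilon^{-n/2}$. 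Writing $\Delta=\pi_F(u^N(1)-u^{N,\epsilon}(1))=-\int_{1-\epsilon}^1 \e^{-\nu A(1-s)}\pi_F B(u^N(s))\,ds$, Cauchy--Schwarz combined with~\eqref{e:maintruc} and the stationary $L^4$-bound on $\|u^N(s)\|_H$ upgrades the $L^1$-estimate $\E[\|\Delta\|]\leq c\epsilon$ of Theorem~\ref{t:besov1} to the $L^2$-estimate $\E[\|\Delta\|^2]\leq c_F\epsilon^2$.

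The heart of the proof is a sharper control of $\E[(\Delta_h^n\phi)(\pi_F u^N(1))-(\Delta_h^n\phi)(\pi_F u^{N,\epsilon}(1))]$ for $\phi\in C^\alpha(\R^d)$ with $\alpha\in(1,2)$. A first-order Taylor expansion and the identity $\nabla(\Delta_h^n\phi)=\Delta_h^n\nabla\phi$ split this into a linear piece $\E[(\Delta_h^n\nabla\phi)(\pi_F u^{N,\epsilon}(1))\cdot\Delta]$ and a remainder bounded by $c\|\phi\|_{C^\alpha}\E[\|\Delta\|^\alpha]\leq c\|\phi\|_{C^\alpha}\epsilon^\alpha$ (by Jensen applied to the $L^2$-bound, which is legal for every $\alpha\leq 2$). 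For the linear piece the naive estimate $\|\Delta_h^n\nabla\phi\|_\infty\E[\|\Delta\|]\leq c\|\phi\|_{C^\alpha}|h|^{\alpha-1}\epsilon$ is too weak, and here stationarity must enter to lower the exponent in $\epsilon$. The natural route is to use the Kolmogorov identity
\[
\E[\mathcal{L}^N\psi(u^N(s))]\equiv 0 \quad \text{for every }s,
\]
valid for stationary $u^N$ and any bounded smooth $\psi$ on $\pi_N H$, to cancel the leading-order contribution of $\int_{1-\epsilon}^1 \E[(\Delta_h^n\nabla\phi)(\pi_F u^{N,\epsilon}(1))\cdot\e^{-\nu A(1-s)}\pi_F B(u^N(s))]\,ds$; what remains has the correct size $c\|\phi\|_{C^\alpha}\epsilon^\alpha$ after using $\|\Delta_h^n\nabla\phi\|_\infty\leq c\|\phi\|_{C^\alpha}|h|^{\alpha-1}$ and the $L^2$-bound on $\Delta$.

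Combining these with the Gaussian estimate $|\E[(\Delta_h^n\phi)(\pi_F u^{N,\epsilon}(1))]|\leq c\|\phi\|_\infty\epsilon^{-n/2}|h|^n$ and optimizing in $\epsilon$ as in Theorem~\ref{t:besov1} gives
\[
|\E[(\Delta_h^n\phi)(\pi_F u^N(1))]|\leq c\|\phi\|_{C^\alpha}|h|^{\alpha_n},\qquad \alpha_n=\frac{2\alpha n}{2\alpha+n}\xrightarrow[n\to\infty]{}2\alpha.
\]
The Bessel--potential trick $\phi=(I-\Delta_d)^{-\beta/2}\psi$ with $\beta>\alpha$ from Theorem~\ref{t:besov1} then yields uniform boundedness of $(f_N)_{N\geq n_F}$ in $B_{1,\infty}^{\alpha_n-\beta}(\R^d)$; choosing $\alpha$ close to $2$ and $\beta$ close to $\alpha$ covers every $s\in(0,2)$. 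Passing to the limit along $N_k\uparrow\infty$ produces the limiting density $f_F$ with the claimed Besov regularity, and positivity follows again from~\cite{Shi07}. The main obstacle is the cancellation of the linear-in-$\Delta$ term: without stationarity it only contributes $\mathcal{O}(|h|^{\alpha-1}\epsilon)$ and spoils the optimization for $\alpha>1$, while the improvement to $\mathcal{O}(|h|^{\alpha-1}\epsilon^\alpha)$ needed here genuinely depends on the time-translation invariance of $u^N$.
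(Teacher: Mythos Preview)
Your scheme diverges from the paper's and has a genuine gap at the decisive step. You keep the auxiliary process of Theorem~\ref{t:besov1}, push $\alpha$ into $(1,2)$, Taylor-expand, and assert that the linear piece
$\E\bigl[(\Delta_h^n\nabla\phi)(\pi_F u^{N,\epsilon}(1))\cdot\Delta\bigr]$
can be cut from $|h|^{\alpha-1}\epsilon$ down to $\epsilon^\alpha$ via the stationary identity $\E[\mathcal{L}^N\psi(u^N(s))]=0$. But that identity relates $B^N(u^N(s))$ to $\nabla\psi$ evaluated at the \emph{same} point $u^N(s)$, whereas your gradient sits at $\pi_F u^{N,\epsilon}(1)$ and the nonlinearity at $u^N(s)$ for $s\in[1-\epsilon,1]$. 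Even after freezing $s=1-\epsilon$ and conditioning on $\mathcal{F}_{1-\epsilon}$, the identity merely trades $\pi_F B^N(u^N)$ for the remaining generator pieces $\nu A u^N$ and the trace term, both $O(1)$, and one recovers $|h|^{\alpha-1}\epsilon$ after the time integral. The tools you then invoke for ``what remains'' --- the $L^\infty$ bound on $\Delta_h^n\nabla\phi$ and the $L^2$ bound on $\Delta$ --- likewise reproduce only $|h|^{\alpha-1}\epsilon$; no actual cancellation has been exhibited, and your closing sentence effectively concedes that this obstacle is unresolved.

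The paper's route is both different and simpler: it keeps $\alpha\in(0,1)$ and instead modifies the auxiliary process, adding to the equation for $\pi_F u^{N,\epsilon}$ on $[1-\epsilon,1]$ the $\mathcal{F}_{1-\epsilon}$-measurable forcing $\pi_F B\bigl(\e^{-\nu A(s-1+\epsilon)}u^N(1-\epsilon)\bigr)$. This preserves the conditional Gaussianity of $\pi_F u^{N,\epsilon}(1)$ (hence the bound $c\epsilon^{-n/2}|h|^n$), while the difference $\pi_F(u^N(1)-u^{N,\epsilon}(1))$ now involves $B(u^N(s))-B\bigl(\e^{-\nu A(s-1+\epsilon)}u^N(1-\epsilon)\bigr)$ rather than $B(u^N(s))$ alone. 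Stationarity enters only through uniform-in-time moments --- in particular the finiteness of $\E\bigl[\|u^N\|_H^{2/3}\|u^N\|_V^2\bigr]$ --- and the time-continuity of $u^N$ then yields $\E\bigl[\|\pi_F(u^N(1)-u^{N,\epsilon}(1))\|_H\bigr]\leq c\,\epsilon^{3/2}$. The first term of the splitting becomes $c[\phi]_\alpha\epsilon^{3\alpha/2}$, optimization gives $\alpha_n=\tfrac{3\alpha n}{3\alpha+n}$, and the Bessel-potential step produces every $s<\alpha_n-\alpha\to 2\alpha$, covering all $s\in(0,2)$. No second-order Taylor expansion and no generator identity are needed: the extra half power of $\epsilon$ comes from subtracting a frozen nonlinearity, not from any cancellation in expectation.
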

\begin{proof}
  We proceed as in the proof of Theorem~\ref{t:besov1}. Fix a stationary solution
  $u$ with law $\Prob$, a sequence $\Prob_{N_k}\rightharpoonup\Prob$ of
  stationary solutions of \eqref{e:galerkin} and a finite dimensional space
  $F=\Span[e_{n_1},\dots,e_{n_F}]$. Write again
  \begin{equation}\label{e:stima2}
    \begin{aligned}
      \E[(\Delta_h^n\phi)(\pi_F u^N(1))]
        &=       \E[(\Delta_h^n\phi)(\pi_F u^N(1)) - (\Delta_h^n\phi)(\pi_F u^{N,\epsilon}(1))]\\
        &\quad + \E[(\Delta_h^n\phi)(\pi_F u^{N,\epsilon}(1))].
    \end{aligned}
  \end{equation}
  where this time $u^{N,\epsilon}$ is defined as the solution of
  \begin{multline*}
    du^{N,\epsilon}
      + \bigl(\nu Au^{N,\epsilon}
         + B(u^{N,\epsilon}) - (1-\eta_\epsilon)\pi_F B(u^{N,\epsilon}) + {}\\
         + (1-\eta_\epsilon)\pi_F B(\e^{-A(s-1+\epsilon)}u^{N,\epsilon}(1-\epsilon))\bigr)\,ds
      = \pi_N \cov^{\frac12}\,dW_s
  \end{multline*}
  so that again $u^N(t) = u^{N,\epsilon}(t)$ for $t\leq 1-\epsilon$, and for $t\geq 1-\epsilon$
  the process $\pi_F u^{N,\epsilon}$ satisfies
  \[
    dv
      + \bigl(\nu \pi_F A v
      + \pi_F B(\e^{-A(s-1+\epsilon)}u^{N,\epsilon}(1-\epsilon))\bigr)\,ds
      = \pi_F \cov^{\frac12}\,dW_s,
  \]
  which is the same equation as in~\eqref{e:justbefore} with an additional
  adapted external forcing. As before, conditioned to $\mathcal{F}_{1-\epsilon}$, 
  $\pi_F u^{N,\epsilon}(1)$ is Gaussian with covariance $Q_F$. Thus, 
  the second term of~\eqref{e:stima2} has the estimate
  \[
    \bigl|\E[(\Delta_h^n\phi)(\pi_F u^{N,\epsilon}(1))]\bigr|
      \leq c\epsilon^{-\frac{n}2}|h|^n \|\phi\|_\infty.
  \]
  We claim that
  \begin{equation}\label{e:claim3}
    \E\bigl[\|\pi_F u^N(1) - \pi_F u^{N,\epsilon}(1)\|_H\bigr]
      \leq c \epsilon^{\frac32}.
  \end{equation}
  Before proving~\eqref{e:claim3}, we show how to use it to conclude the proof.
  Indeed, as before, the first term on the right--hand side of~\eqref{e:stima2}
  is bounded from above as
  \[
    \begin{aligned}
      \bigl|\E[(\Delta_h^n\phi)(\pi_F u^N(1)) - (\Delta_h^n\phi)(\pi_F u^{N,\epsilon}(1))]\bigr|
        &\leq c[\phi]_\alpha \E\bigl[\|\pi_F u^N(1) - \pi_F u^{N,\epsilon}(1)\|_H\bigr]^\alpha\\
        &\leq c[\phi]_\alpha \epsilon^{\frac32\alpha},
    \end{aligned}
  \]
  and so
  \[
    \int_{R^d} (\Delta_h^n f_N)(x)\phi(x)\,dx
      \leq c[\phi]_\alpha \epsilon^{\frac32\alpha} + c \epsilon^{-\frac{n}2}|h|^n\|\phi\|_\infty
      \leq c \|\phi\|_{C^\alpha} |h|^{\alpha_n},
  \]
  by choosing $\epsilon=|h|^{2n/(3\alpha+n)}$, where this time
  $\alpha_n = \tfrac{3\alpha n}{3\alpha+n}$. As in the proof of
  Theorem~\ref{t:besov1}, the above estimate yields that
  $(f_N)_{N\geq n_F}$ is a bounded sequence in $B_{1,\infty}^s$, for every
  $s<\alpha_n-\alpha$. Since $\alpha_n-\alpha\to 2\alpha$ as $n\to\infty$ and
  $\alpha\in(0,1)$ can be arbitrarily chosen, we conclude that $(f_N)_{N\geq n_F}$
  is bounded in $B_{1,\infty}^s$ for every $s<2$. In particular, since
  $B_{1,\infty}^s(\R^d)\hookrightarrow B_{1,1}^s(\R^d) = W^{s,1}(\R^d)$,
  $(f_N)_{N\geq n_F}$ is also bounded in $W^{s,1}(\R^d)$ for every $s<2$.

  We conclude with the proof of~\eqref{e:claim3}. We have that
  \[
    \pi_F\bigl(u^N(1) - u^{N,\epsilon}\bigr)
      = \int_{1-\epsilon}^1 \e^{-\nu A(1-s)}\pi_F\bigl(B(\e^{-\nu A(s-1+\epsilon)}u^N(1-\epsilon)) - B(u^N(s))\bigr)\,ds,
  \]
  hence by~\eqref{e:maintruc} and H\"older's inequality,
  \begin{equation}\label{e:claim3bis}
    \begin{aligned}
      \lefteqn{\E\bigl[\|\pi_F\bigl(u^N(1) - u^{N,\epsilon}(1)\bigr)\|\bigr]\leq}\\
        &\quad\leq c \int_{1-\epsilon}^1\E\bigl[\bigl(\|\e^{-\nu A(s-1+\epsilon)}u^N(1-\epsilon)\|_H + \|u^N(s)\|_H\bigr)\|\e^{-\nu A(s-1+\epsilon)}u^N(1-\epsilon) - u^N(s)\|_H\bigr]\,ds\\
        &\quad\leq c \int_{1-\epsilon}^1\E\bigl[\bigl(\|u^N(1-\epsilon)\|_H + \|u^N(s)\|_H\bigr)^4\bigr]^{\frac14}
                                           \E\bigl[\|\e^{-\nu A(s-1+\epsilon)}u^N(1-\epsilon) - u^N(s)\|_H^{\frac43}\bigr]^{\frac34}\,ds\\
        &\quad\leq c \int_{1-\epsilon}^1 \E\bigl[\|\e^{-\nu A(s-1+\epsilon)}u^N(1-\epsilon) - u^N(s)\|_H^{\frac43}\bigr]^{\frac34}\,ds,
    \end{aligned}
  \end{equation}
  since $\E[\|u^N(s)\|_H^4]$ is finite, constant in $s$ and uniformly bounded in $N$.
  Now, for $s\in(1-\epsilon,1)$,
  \[
    \e^{-\nu A(s-1+\epsilon)}u^N(1-\epsilon) - u^N(s)
      =   \int_{1-\epsilon}^s \e^{-\nu A(s-r)}B(u^N(r))\,dr
        - \int_{1-\epsilon}^s \e^{-\nu A(s-r)}\cov^{\frac12}dW_r
  \]
  and so
  \[
    \begin{aligned}
      \lefteqn{\E\bigl[\|\e^{-\nu A(s-1+\epsilon)}u^N(1-\epsilon) - u^N(s)\|_H^{\frac43}\bigr]\leq}\\
        &\quad\leq   \E\Bigl[\Bigl(\int_{1-\epsilon}^s\bigl\|\e^{-\nu A(s-r)}B(u^N(r))\bigr\|_H\,dr\Bigr)^{\frac43}\Bigr]
                   + \E\Bigl[\Bigl\|\int_{1-\epsilon}^s \e^{-\nu A(s-r)}\cov^{\frac12}dW_r\Bigr\|_H^{\frac43}\Bigr]\\
        &\quad=\memo{1} + \memo{2}.
    \end{aligned}
  \]
  To estimate \memo{1} we use the inequality
  \[
    \|A^{-\frac12}B(v)\|_H
      \leq c\|v\|_{L^4}^2
      \leq c\|v\|_H^{\frac12}\|v\|_V^{\frac32},
  \]
  standard estimates on analytic semigroups and we exploit the fact that
  $u^N$ is stationary,
  \[
    \begin{aligned}
      \memo{1}
        &\leq \E\Bigl[\Bigl(\int_{1-\epsilon}^s\frac{c}{\sqrt{s-r}}\|u^N(r)\|_H^{\frac12}\|u^N(r)\|_V^{\frac32}\,dr\Bigr)^{\frac43}\Bigr]\\
        &\leq c\epsilon^{\frac13} \E\Bigl[\int_{1-\epsilon}^s\frac1{(s-r)^{\frac23}}\|u^N(r)\|_H^{\frac23}\|u^N(r)\|_V^2\,dr\Bigr]\\
        &=    c\epsilon^{\frac23} \E\bigl[\|u^N\|_H^{\frac23}\|u^N\|_V^2\bigr]\\
        &=    c\epsilon^{\frac23}.
    \end{aligned}
  \]
  The second term is standard,
  \[
    \memo{2}
      \leq \E\Bigl[\Bigl\|\int_{1-\epsilon}^s \e^{-\nu A(s-r)}\cov^{\frac12}dW_r\Bigr\|_H^2\Bigr]^{\frac23}\\
      \leq \Bigl(\frac12\epsilon\Tr(\cov)\Bigr)^{\frac23}
      = c\epsilon^{\frac23},
  \]
  and in conclusion
  \[
    \E\bigl[\|\e^{-\nu A(s-1+\epsilon)}u^N(1-\epsilon) - u^N(s)\|_H^{\frac43}\bigr]
      \leq c\epsilon^{\frac23},
  \]  
  hence from~\eqref{e:claim3bis},
  \[
    \E\bigl[\|\pi_F\bigl(u^N(1) - u^{N,\epsilon}\bigr)\|\bigr]
      \leq c\epsilon^{\frac32},
  \]
  which proves~\eqref{e:claim3}.
\end{proof}
\bibliographystyle{amsplain}

\end{document}